\documentclass[12pt]{amsart}
\usepackage[margin=1in]{geometry}
\usepackage{amsmath,amssymb,amsthm,hyperref,graphicx}
\usepackage{mathabx}

\newtheorem{thm}{Theorem}[section]
\newtheorem{prop}[thm]{Proposition}

\theoremstyle{definition}

\newtheorem{defn}[thm]{Definition}

\newtheorem{example}[thm]{Example}

\newtheorem{conj}[thm]{Conjecture}

\newtheorem{lemma}[thm]{Lemma}
\newtheorem*{remark*}{Remark}
\newtheorem{claim}[thm]{Claim}
\newtheorem{fact}[thm]{Fact}
\newtheorem{facts}[thm]{Facts}
\newtheorem{ques}[thm]{Question}

\newcommand{\hatj}{\widehat{j}}
\newcommand{\hati}{\widehat{i}}

\begin{document}

\title[The Alpha Problem \& Line Count Configurations]{The Alpha Problem \& Line Count Configurations}

\author[S. M. Cooper]{Susan M. Cooper}
\address{Department of Mathematics\\
Central Michigan University\\
Mt. Pleasant, MI 48859 USA}
\email{s.cooper@cmich.edu}

\author[S. G. Hartke]{Stephen G. Hartke}
\address{Department of Mathematics\\
University of Nebraska\\
Lincoln, NE 68588-0130 USA}
\email{hartke@math.unl.edu}

\date{March 30, 2014}

\thanks{Acknowledgements: The second author is supported in part by National Science Foundation grant DMS-0914815.  We thank Brian Harbourne and Craig Huneke for introducing us to the problem addressed in this article.  We also thank Jamie Radcliffe and Leo Butler for several illuminating consultations and Matt Young for his helpful suggestions.  The authors also thank the anonymous referee for his or her helpful suggestions and in particular the alternate proof of Claim 2.3.}

\keywords{Hilbert functions, fat points, initial degrees.}

\subjclass[2010]{Primary 13D40, 14C99; Secondary 14Q99, 05E40.}

\begin{abstract}
Motivated by the work of Chudnovsky and the Eisenbud-Mazur Conjecture on evolutions, Harbourne and Huneke give a series of conjectures that relate symbolic and regular powers of ideals of fat points in projective space $\mathbb P^n$.  The conjectures involve both containment statements and bounds for the initial degree in which there is a non-zero form in an ideal.  Working with initial degrees, we verify two of these conjectures for special line count configurations in projective 2-space over an algebraically closed field of characteristic 0.
\end{abstract}

\maketitle

\section{Introduction}

The extent to which symbolic and regular powers of a homogeneous ideal in a polynomial ring are different has intrigued mathematicians for a number of years.  With many nice applications in Algebraic Geometry and Commutative Algebra, it is not surprising that a number of conjectures about containments between these powers have played a central role in open problems.  
A small sample of work studying symbolic powers includes \cite{refB. et al.}, \cite{refBC}, \cite{refBCH}, \cite{refBH1}, \cite{refBH2}, \cite{refCh}, \cite{refCHT}, \cite{refELS}, \cite{refEM}, \cite{refGHM}, \cite{refHHu}, \cite{refHaHu}, and \cite{refTY}.  

Given a homogeneous ideal $I$ in the polynomial ring $R = k[\mathbb P^n] = k[x_0, \ldots, x_n]$ over the field $k$, the {\it $m$th symbolic power} is defined to be $I^{(m)} = R \cap (\cap_P(I^m)_P)$ where the second intersection is taken over all associated primes $P$ of $I$ and the intersections are taken in the field of fractions of $R$.  It is well-known that $I^{(nm)} \subseteq I^m$ for any $m \geq 1$ (see \cite{refELS}, \cite{refHHu}).  In hopes of tighter containments, Harbourne and Huneke consider the following general question:

\begin{ques}\cite[Question 1.3]{refHaHu}\label{q}
Let $I \subset R = k[x_0, \ldots, x_n]$ be a homogeneous ideal.  For which positive integers $m, i$ and $j$ do we have $I^{(m)} \subseteq M^jI^i$, where $M = (x_0, \ldots, x_n)$ is the irrelevant ideal of $R$?
\end{ques} 

When $I$ is an ideal of a finite set of distinct points, the symbolic power $I^{(m)}$ defines a \emph{fat point scheme}.  In the spirit of Question~\ref{q}, Harbourne and Huneke conjecture several relationships for fat points, including

\begin{conj}\cite[Conjectures 2.1 and 4.1.5]{refHaHu}\label{motivconj}
Let $I \subset R = k[x_0, \ldots, x_n]$ be the ideal of a finite set of distinct points in $\mathbb P^n$ and $M = (x_0, \ldots, x_n) \subset R$ be the irrelevant ideal.  Then 
\begin{enumerate}
\item[(i)] $I^{(rn)} \subseteq M^{r(n-1)}I^r$ holds for all $r > 0$; 
\item[(ii)] $I^{(rn-(n-1))} \subseteq M^{(r-1)(n-1)}I^r$ holds for all $r > 0$.
\end{enumerate}
\end{conj}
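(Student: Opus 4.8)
The plan is to treat both containments as refinements of the Ein--Lazarsfeld--Smith / Hochster--Huneke containment $I^{(rn)} \subseteq I^r$ (\cite{refELS}, \cite{refHHu}) recorded above, so that the real content is to extract the extra factor $M^{r(n-1)}$ in part (i), and $M^{(r-1)(n-1)}$ in part (ii). In keeping with the ``alpha'' theme of the paper, I would first reformulate each containment through initial degrees. Writing $\alpha(J)$ for the least degree of a nonzero form in a homogeneous ideal $J$, one has $\alpha(M^j I^i) = j + \alpha(I^i)$, so a necessary condition for $I^{(m)} \subseteq M^j I^i$ is the degree inequality $\alpha(I^{(m)}) \ge j + \alpha(I^i)$. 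The first step is to pin down $\alpha(I^r)$: for points $\alpha(I^r) \le r\alpha(I)$, and I would isolate the generic situation where equality holds, since that is what makes the numerology of Conjecture~\ref{motivconj} tight.

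Second, I would prove the degree inequalities. For part (i), with $m = rn$ and $j = r(n-1)$, the requirement $\alpha(I^{(rn)}) \ge r(n-1) + r\alpha(I) = r(\alpha(I)+n-1)$ is \emph{exactly} the Chudnovsky bound $\alpha(I^{(m)})/m \ge (\alpha(I)+n-1)/n$ (\cite{refCh}) evaluated at $m = rn$; thus (i) rests squarely on Chudnovsky's conjecture. For part (ii), with $m = rn-(n-1)$ and $j = (r-1)(n-1)$, a short computation gives that the bound needed, $\alpha(I^{(m)}) \ge r\alpha(I) + (r-1)(n-1)$, exceeds what Chudnovsky supplies by $(\alpha(I)-1)(n-1)/n$; so (ii) demands a strictly sharper lower bound and is correspondingly harder. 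I would pursue both through the Waldschmidt constant and subadditivity of initial degrees, extracting the bound from the geometry of the fat point scheme cut out by $I^{(m)}$.

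Third comes the upgrade from the initial-degree inequality to the ideal containment. The ideals $M^{r(n-1)}I^r$ and $I^r$ agree in every degree at least $t + r(n-1)$, where $t$ is the top degree of a minimal generator of $I^r$; hence, granting $I^{(rn)} \subseteq I^r$, it remains only to place those elements of $I^{(rn)}$ of degree below $t + r(n-1)$ into $M^{r(n-1)}I^r$. When $I^r$ is generated in its initial degree (so $t = \alpha(I^r)$), the bound of step two closes this at once; in general it requires honest control of the minimal free resolution of $I^r$.

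The main obstacle is step two. The bound needed for (i) is equivalent to Chudnovsky's conjecture and the one needed for (ii) is strictly stronger, and both remain open for arbitrary finite sets of points in $\mathbb P^n$: no general lower bound on $\alpha(I^{(m)})$ of the required sharpness is known. A second, subtler difficulty is the step-three upgrade, which needs the generator degrees --- ideally the full resolution --- of $I^r$, itself intractable for arbitrary points. Both difficulties dissolve exactly for configurations in which the Hilbert functions of the fat point scheme and of $I^r$ can be computed outright, so an unconditional proof of Conjecture~\ref{motivconj} in full generality is out of reach by this route; the viable strategy is to verify the two numerical inequalities, together with the finite overlap range of step three, for such computable families.
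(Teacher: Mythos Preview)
The statement you are addressing is recorded in the paper as a \emph{conjecture}; the paper does not prove it, and your analysis correctly arrives at the reason why. Your three-step programme is sound as a diagnosis: step one is routine (in fact $\alpha(I^r)=r\alpha(I)$ holds for every homogeneous ideal $I$, not just generically, since any homogeneous element of $I^r$ is a sum of $r$-fold products of homogeneous elements of $I$, each of degree at least $\alpha(I)$); step two is, as you observe, Chudnovsky for part~(i) and strictly stronger than Chudnovsky for part~(ii); and step three is a genuinely separate obstruction requiring control of the generating degrees of $I^r$.

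Where the paper parts ways with your outline is in scope. The paper does \emph{not} attempt your step three at all. It extracts from Conjecture~\ref{motivconj} only the initial-degree consequences---your step-two inequalities, recorded separately as Conjectures~\ref{THEEVENCONJ} and~\ref{THEODDCONJ}---and then verifies those alpha inequalities (not the ideal containments) for line count configurations of type $(c_1,\ldots,c_t)$ with $c_i\ge i$ in $\mathbb P^2$. The method is purely combinatorial, bounding $\alpha(\ell\mathbb W)$ from below via the reduction-vector machinery of \cite{refCHT} for Hilbert functions of fat points, rather than through Waldschmidt constants or subadditivity. So your proposal is not incorrect, but it is a plan for a stronger theorem than anything the paper claims; the paper's actual contribution sits entirely inside your step two, specialised to one explicit family in the plane.
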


Currently there are limited means to verify these conjectures for ideals $I$ of general points in $\mathbb{P}^n$ for $n > 2$ (see \cite{refBCH} for results in $\mathbb P^2$ and \cite{refDu} for results in $\mathbb P^3$).  Thus, looking at special cases has become important in developing tools that can be used in more general situations and in the search for counter-examples.  For example, fat points supported on \emph{star configurations} (which are special point sets given by the pair-wise intersection of hyperplanes) are considered in \cite{refBCH}.  Star configurations have often exhibited extremal behavior and so make a good test case for counter-examples.  In the same spirit we consider fat points supported on \emph{line count configurations} in this paper.  Arising in a number of situations, line count configurations are similar to star configurations and provide us with a large family of points for which we have special tools to work with.  

It is easy to see that if Conjecture~\ref{motivconj} is true, then we have an immediate relationship between the initial degrees in which there are non-zero forms in the symbolic power $I^{(rn)}$ and the regular powers $I^r$ and $M^{r(n-1)}$ for part (i), and in the symbolic power $I^{(rn-(n-1))}$ and the regular powers $I^r$ and $M^{(r-1)(n-1)}$ in part (ii).  In this paper we investigate these relationships on initial degrees so as to provide evidence of Conjecture~\ref{motivconj}.  The conjectures are indeed challenging to prove so it is not surprising that a mixture of tools from Algebraic Geometry and Discrete Mathematics are used.  

The paper is outlined as follows.  Clear statements of the conjectures we pursue are given in Section~\ref{alphaintro}.  Section~\ref{alphaandhilb} then provides the necessary background for symbolic powers of ideals of fat points and their Hilbert functions.  The Hilbert function gives rise to natural bounds on the initial degrees and hence provides a first-step in our investigation.  In Section~\ref{naturalfamily} we further explain why we focus our attention on the special line count configurations.  In Section~\ref{keycase}, we prove a key case that we use in Section~\ref{generalizations} to prove the desired conjectures for any line count configuration of type $(c_1, c_2, \ldots, c_t)$ where $c_i \geq i$.  Finally, in Section~\ref{future}, we discuss future directions.

\subsection{The Alpha Conjectures}\label{alphaintro}

We fix $R:=k[\mathbb P^n] = k[x_0, \ldots, x_n]$ to be the homogeneous coordinate ring for $\mathbb P^n$ over an algebraically closed field $k$ of characteristic 0.

\begin{defn}
Let $P_1, \ldots, P_q$ be distinct points in $\mathbb P^n$ and $m_1, \ldots, m_q$ be non-negative integers.  We denote the ideal generated by all the forms vanishing at the point $P_i$ by $I_{P_i}$.  The ideal $I:=I_{P_1}^{m_1} \cap I_{P_2}^{m_2} \cap \cdots \cap I_{P_q}^{m_q} \subset R$ defines a subscheme $\mathbb Y \subset \mathbb P^n$ which is called a {\it fat point scheme}.
\end{defn}

To ease notation, we write $\mathbb Y := m_1P_1 + \cdots + m_qP_q$ and denote $I$ by $I_{\mathbb Y}$.  If $\mathbb X = \{P_1, \ldots, P_q\}$ and $m_1 = \cdots = m_q = m$ then we simply write $\mathbb Y = m \mathbb X$.  The \emph{support} of $\mathbb Y$ is the set of all $P_i$ for which $m_i > 0$.  We observe that the $m$th symbolic power of $I_{\mathbb Y}$ is $I_{\mathbb Y}^{(m)} = I_{P_1}^{mm_1} \cap I_{P_2}^{mm_2} \cap \cdots \cap I_{P_q}^{mm_q}$.  Thus, when $m_1 = \cdots = m_q = 1$ we see that the $m$th symbolic power of $I_{\mathbb Y}$ gives us the fat point scheme $m \mathbb X$.

The conjectures of interest in this paper give a relationship between the initial degrees of forms in the defining ideals of a fat point scheme and its support.

\begin{defn}
Let $\mathbb Y$ be a fat point scheme in $\mathbb P^n$.  We define the \emph{initial degree of $\mathbb Y$} to be
$$\alpha(\mathbb Y) := \min\{t \in \mathbb Z_{\geq 0} \mid \text{there exists a non-zero form in $I_{\mathbb Y}$ of degree $t$}\}.$$
\end{defn}

We are interested in verifying the following conjectures of Harbourne and Huneke.  These are implications of parts (i) and (ii), respectively, in Conjecture~\ref{motivconj}.

\begin{conj}\label{THEEVENCONJ}
Let $\mathbb W$ be a finite set of distinct points in $\mathbb P^n$ and consider the family of fat point schemes $(rn)\mathbb W \subset \mathbb P^n$ for $r \geq 1$.  For each fat point scheme member of this family, we have the inequality
$$\alpha((rn)\mathbb W) \geq r \alpha(\mathbb W) + r(n-1).$$
\end{conj}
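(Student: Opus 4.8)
The plan is to deduce the asserted inequality, for an arbitrary finite set $\mathbb W \subset \mathbb P^n$ and every $n$, from the stronger containment in Conjecture~\ref{motivconj}(i), thereby reducing the whole statement to a single degree count. Set $I := I_{\mathbb W}$, so that $\alpha(\mathbb W) = \alpha(I)$ and, because $\mathbb W$ is reduced, $I^{(rn)} = I_{(rn)\mathbb W}$ and $\alpha((rn)\mathbb W) = \alpha(I^{(rn)})$. The elementary observation I would record first is a formula for the initial degree of a product: for a homogeneous ideal $I$ and integers $a, b \ge 0$ one has $\alpha(M^a I^b) = a + b\,\alpha(I)$. Indeed $\alpha(I^b) = b\,\alpha(I)$, since a product of $b$ forms of degree $\alpha(I)$ realizes the upper bound while every homogeneous element of $I^b$ is a sum of products of $b$ elements of $I$ and so has degree $\ge b\,\alpha(I)$; and likewise every generator of $M^a I^b$ is a product of a degree-$a$ generator of $M^a$ with a generator of $I^b$ of degree $\ge b\,\alpha(I)$. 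Granting the containment $I^{(rn)} \subseteq M^{r(n-1)} I^r$, a nonzero form of minimal degree in $I^{(rn)}$ then lies in $M^{r(n-1)} I^r$, whence
\[
\alpha((rn)\mathbb W) = \alpha(I^{(rn)}) \ge \alpha\!\left(M^{r(n-1)} I^r\right) = r(n-1) + r\,\alpha(I) = r\,\alpha(\mathbb W) + r(n-1),
\]
which is exactly the desired bound. Thus the initial-degree statement is a formal consequence of the containment, matching the paper's framing of it as an implication of Conjecture~\ref{motivconj}(i).

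Because the claim concerns only initial degrees and not the containment itself, I would also pursue a second, weaker-looking route that is often more tractable. Dividing through by $rn$, the inequality reads $\alpha(I^{(rn)})/(rn) \ge (\alpha(I) + n - 1)/n$, i.e.\ the Chudnovsky-type bound evaluated at $m = rn$. Here I would use the subadditivity $I^{(a)} I^{(b)} \subseteq I^{(a+b)}$, which gives $\alpha(I^{(a+b)}) \le \alpha(I^{(a)}) + \alpha(I^{(b)})$; by Fekete's lemma the Waldschmidt constant $\widehat{\alpha}(I) := \lim_{m\to\infty} \alpha(I^{(m)})/m$ equals $\inf_m \alpha(I^{(m)})/m$, so that $\alpha(I^{(rn)})/(rn) \ge \widehat{\alpha}(I)$ for every $r$. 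It therefore suffices to prove the single inequality $\widehat{\alpha}(I) \ge (\alpha(I) + n - 1)/n$, namely Chudnovsky's conjecture, which then yields the stated bound simultaneously for all $r$. In $\mathbb P^2$ this last inequality is Chudnovsky's theorem, provable by a B\'ezout argument on the plane curves cut out by minimal-degree forms of the $I^{(m)}$, and it is known in several further special cases; in each such case the bound follows at once.

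The main obstacle is precisely that for arbitrary points in $\mathbb P^n$ with $n \ge 3$ neither the containment $I^{(rn)} \subseteq M^{r(n-1)} I^r$ nor the Chudnovsky bound $\widehat{\alpha}(I) \ge (\alpha(I)+n-1)/n$ is currently available, which is why the statement must be recorded as a conjecture rather than a theorem. A fully general proof would have to improve the known containment $I^{(rn)} \subseteq I^r$ — which yields only $\alpha(I^{(rn)}) \ge r\,\alpha(I)$ — by the additional additive term $r(n-1)$, and producing this $M$-adic gain uniformly over all configurations is exactly the hard part. My fallback, consequently, is to establish the inequality in settings where $\alpha(I^{(rn)})$ can be bounded below directly through the Hilbert function of the fat point scheme together with the combinatorial structure of the support, rather than through any general containment; this is the line of attack the remainder of the paper develops.
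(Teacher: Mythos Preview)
Your proposal is not a proof but a correct meta-analysis, and that is exactly appropriate here: the statement is a \emph{conjecture}, and the paper offers no general proof either. The paper merely records, as you do, that it is an immediate consequence of the containment in Conjecture~\ref{motivconj}(i) via the degree count $\alpha(M^{r(n-1)}I^r)=r(n-1)+r\,\alpha(I)$, and then devotes the remainder of the paper to verifying the inequality directly for line count configurations in $\mathbb P^2$ through Hilbert-function bounds---precisely the fallback you describe in your last paragraph.

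Your second route, reducing to the Waldschmidt constant and Chudnovsky's inequality $\widehat\alpha(I)\ge(\alpha(I)+n-1)/n$, is a genuine addition not spelled out in the paper. It is correct and in some ways sharper, since it shows that the conjectured bound at the specific multiplicities $m=rn$ follows from a single asymptotic statement; in particular, for $n=2$ it gives the full conjecture for \emph{all} point sets via Chudnovsky's theorem, whereas the paper's combinatorial machinery only reaches the line count configurations. The trade-off is that the paper's approach yields exact or nearly exact values of $\alpha(\ell\mathbb W)$ for the configurations it treats, information that the Waldschmidt reduction does not provide.

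In short: there is no gap to flag, because you correctly recognize that no proof exists in general and you accurately locate both the paper's stated implication and its actual line of attack.
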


\begin{conj}\label{THEODDCONJ}
Let $\mathbb W$ be a finite set of distinct points in $\mathbb P^n$ and consider the family of fat point schemes $(rn-(n-1))\mathbb W \subset \mathbb P^n$ for $r \geq 1$.  For each fat point scheme member of this family, we have the inequality
$$\alpha((rn-(n-1))\mathbb W) \geq r \alpha(\mathbb W) + (r-1)(n-1).$$
\end{conj}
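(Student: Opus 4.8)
The plan is to convert this inequality on initial degrees into a statement about Hilbert functions and then, restricting to $\mathbb{P}^2$, to exploit the explicit geometry of the line count configurations of Section~\ref{naturalfamily}. For any fat point scheme $\mathbb{Z}\subset\mathbb{P}^n$ one has $\alpha(\mathbb{Z})\geq s$ if and only if $\dim_k(I_{\mathbb{Z}})_{s-1}=0$, equivalently the Hilbert function of $R/I_{\mathbb{Z}}$ takes the value $\binom{n+s-1}{n}$ in degree $s-1$. Thus Conjecture~\ref{THEODDCONJ} is equivalent to the assertion that no nonzero form of degree at most $r\alpha(\mathbb{W})+(r-1)(n-1)-1$ vanishes to order $rn-(n-1)$ at every point of $\mathbb{W}$. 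I would first record this reduction, together with the elementary Hilbert-function bounds of Section~\ref{alphaandhilb}, and note that Conjectures~\ref{THEEVENCONJ} and~\ref{THEODDCONJ} are the ``even'' and ``odd'' members of a single family of Chudnovsky-type initial-degree bounds, so the two arguments should run in parallel, the only structural difference being that the symbolic level $rn-(n-1)$ is one short of a multiple of $n$.

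Next I would specialize to $\mathbb{P}^2$, where $rn-(n-1)=2r-1$ and the target reads $\alpha\big((2r-1)\mathbb{W}\big)\geq r\alpha(\mathbb{W})+(r-1)$, and to $\mathbb{W}$ a line count configuration of type $(c_1,\dots,c_t)$ with $c_i\geq i$; here the defining lines furnish a Bezout-style peeling argument that gives the lower bound on $d$. Suppose $F\neq 0$ has degree $d=\alpha\big((2r-1)\mathbb{W}\big)$ and vanishes to order $2r-1$ at each point of $\mathbb{W}$. If a line $\ell$ of the configuration carries $N$ of the points, then $F|_\ell$ has at least $N(2r-1)$ zeros counted with multiplicity, so as soon as $N(2r-1)>d$ Bezout forces $\ell\mid F$. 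Writing $F=\ell^{a}G$ with $\ell\nmid G$, one finds $G$ vanishes to order at least $2r-1-a$ at the points of $\mathbb{W}$ on $\ell$ and to order $2r-1$ at the remaining points, so $G$ lies in the ideal of a residual fat point scheme supported on a smaller line count configuration, and the inequality $N(2r-1-a)\leq\deg(G|_\ell)=d-a$ bounds $a$ from below. Iterating over the lines, and using the hypothesis $c_i\geq i$ to guarantee that at each stage some line still carries enough collinear points to re-trigger Bezout, peels $F$ apart completely; summing the degrees of the linear factors removed, against the value of $\alpha(\mathbb{W})$ and the residual data, yields $d\geq r\alpha(\mathbb{W})+(r-1)$.

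In practice I would first prove the ``key case'' of Section~\ref{keycase} --- presumably a configuration of a single, extremal type --- by this peeling argument with the cleanest possible bookkeeping, and then in Section~\ref{generalizations} run an outer induction (on $t$, or on $r$) that strips off the points on one line and checks that the residual scheme is again a line count configuration to which the inductive hypothesis applies. The Hilbert-function computations of Section~\ref{alphaandhilb} supply the complementary information: they pin down $\alpha(\mathbb{W})$ itself and certify that the degree drops accumulated in the peeling really do add up to the target $r\alpha(\mathbb{W})+(r-1)(n-1)$ and not merely to a weaker bound. The main obstacle --- and the point where the ``odd'' and ``even'' conjectures genuinely diverge --- is exactly this bookkeeping: since the multiplicity $rn-(n-1)$ is one short of $rn$, the Bezout steps must be combined tightly enough that the argument does not shed the extra $n-1$ and collapse to the strictly weaker estimate $\alpha\geq r\alpha(\mathbb{W})+(r-1)(n-1)-(n-1)$. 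Proving the conjecture in $\mathbb{P}^n$ for $n>2$, where the analogue of the peeling argument would require hyperplanes and residuation of higher-codimension subschemes and where even the Hilbert functions of the relevant configurations are not fully understood, remains out of reach by these methods and is why the attack is confined to $\mathbb{P}^2$.
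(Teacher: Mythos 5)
Your opening reduction (from the $\alpha$-inequality to vanishing of the relevant graded piece) and your decision to specialize to line count configurations in $\mathbb{P}^2$ with $c_i\ge i$ match the scope of what is actually established for this conjecture, but the route you then take is not the paper's, and as written it has a genuine gap: the decisive quantitative step is only asserted, never performed. The paper does not peel off lines from a hypothetical form $F$ via Bezout; it uses the reduction-vector machinery of \cite{refCHT}. By Lemma~\ref{StarLemma} the scheme $\ell\mathbb{W}$ (with $\ell=2r-1$) has reduction vector $d=(\ell,\dots,\ell)*(1,2,\dots,t)$, so $\alpha(\ell\mathbb{W})\ge\alpha(diag(d))=\ell t+\min_i(d_i-i)$, and the conjecture reduces to the purely arithmetic bound $S(d)\le\tfrac12(\ell-1)(t-1)$, i.e.\ to bounding $\Phi_t(j)=\bigl(\sum_{b=1}^{t}\min(\lfloor j/b\rfloor,\ell)\bigr)-j$ (Claim~\ref{THECLAIM}, Theorem~\ref{themainthm}); the general type $c_i\ge i$ is then handled by the monotonicity Lemmas~\ref{genprelemma} and~\ref{genprelemma2}. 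Proving that bound occupies Lemmas~\ref{jbig}--\ref{l=2} and Proposition~\ref{middlej}, with a delicate case analysis and parity-sensitive exceptional cases (when one of $\ell,t$ equals $2$ and the other is even the bound genuinely weakens). Your sketch contains no mechanism that would produce this arithmetic content; you yourself identify the bookkeeping as ``the main obstacle'' and then leave it undone, which is precisely the proof.

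Beyond the missing estimate, two structural points in your peeling scheme would fail as stated. First, after writing $F=\ell^{a}G$, the residual scheme has mixed multiplicities ($2r-1-a$ at the points on the peeled line, $2r-1$ elsewhere), so it is not of the form $(2r-1)\mathbb{W}'$ for a smaller line count configuration, and the inductive hypothesis you propose to invoke does not apply; one needs a statement robust under non-uniform multiplicities, which is exactly what the $f_d$ lower bounds of \cite{refCHT} provide in the paper. Second, the Bezout trigger $N(2r-1)>d$ with $d\approx rt+r-1$ only fires for lines carrying more than roughly $t/2$ points, and after a few peels the multiplicities drop so that no line triggers at all; the iteration stalls and you would still need a terminal-stage count of conditions versus degrees --- again the hard quantitative core. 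So the proposal is a plausible alternative strategy in outline (Bezout/residuation arguments of this kind do exist for related Chudnovsky-type bounds), but it is not a proof, and the part it omits is the part the paper actually proves.
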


\subsection{Bounds for Alpha via the Hilbert Function}\label{alphaandhilb}

In this section we briefly recall some of the necessary background of fat points.  From this point onwards we concentrate our investigation in $\mathbb P^2$ and let $R = k[x_0, x_1, x_2] = k[\mathbb P^2]$ where $k$ is an algebraically closed field of characteristic 0.

For any homogeneous ideal $J \subset R$, the quotient $A:=R/J$ is graded:  letting $J_t$ (respectively, $R_t$) be the span of the forms in $J$ (respectively, in $R$) of degree $t$, we see that $A = \oplus_{t \ge 0}A_t$ where $A_t := R_t/J_t$.  The {\it Hilbert function} $H(A)$ gives the $k$-vector space dimension of $A_t$ as a function of $t$.  That is, we define $H(A, t) := \dim_k R_t - \dim_k J_t$ for $t \ge 0$.  We collect these dimensions together in a sequence $H(A) := (H(A, 0), H(A, 1), \ldots)$.  Related to this sequence of dimensions is the {\it first difference Hilbert function} $\Delta H(A)$:  if $H(A) = (h_0, h_1, h_2, \ldots)$, then $\Delta H(A)$ is the sequence $(h_0, h_1 - h_0, h_2 - h_1, \ldots)$.  Note that if $\mathbb Y$ is a fat point scheme, then the ideal $I_{\mathbb Y}$ is homogeneous and so we define the Hilbert function of $\mathbb Y$ to be $H(\mathbb Y) := H(R/I_{\mathbb Y})$ and the first difference Hilbert function of $\mathbb Y$ to be $\Delta H(\mathbb Y)$.  

It will be important to connect $H(\mathbb Y)$ and $\alpha(\mathbb Y)$.  Since $\alpha(\mathbb Y)$ is the initial degree $t$ in which there is a non-zero form in $I_{\mathbb Y}$, we see that 

\begin{align*}
\alpha(\mathbb Y) & = \min\left\{t \in \mathbb Z_{\geq 0} \mid H(\mathbb Y,t) < \dim_k R_t = \binom{t+2}{2} \right\} \\
                  & = \min \left\{t \in \mathbb Z_{\geq 0} \mid \Delta H(\mathbb Y, t) < \dim_k (k[x_0, x_1])_t = t+1 \right\}.
\end{align*}

With this in mind, given any non-negative integer sequence $w = (w_0, w_1, \ldots)$ lying below $\Delta H(k[\mathbb P^2]) = (1, 2, 3, 4, \ldots)$, we will let $\alpha(w)$ be the initial degree for which $w$ is strictly less than the corresponding value of $\Delta H(k[\mathbb P^2])$.  That is, $\alpha(w)$ is the initial degree $t \geq 0$ in which $w_t < t+1$.

For any fat point scheme $\mathbb Y$ in $\mathbb P^2$, results of Cooper--Harbourne--Teitler \cite{refCHT} give us bounding functions for $H(\mathbb Y)$ which naturally bound $\alpha(\mathbb Y)$.  In \cite{refCHT}, the authors study the Hilbert function of a fat point scheme $\mathbb Y \subset \mathbb P^2$ given the multiplicities and information about which subsets of the support are collinear.  A {\it reduction vector $d$} is defined based on the collinearity information and this vector is used to give upper and lower bounds on $H(\mathbb Y)$ in each degree.  We now go into more details of this approach.

\begin{defn}\cite{refCHT}
Let $\mathbb Y = m_1P_1 + m_2P_2 + \cdots + m_qP_q \subset \mathbb P^2$.  Let $\mathbb L$ be a line defined by the linear form $F$. The subscheme 
$$\mathbb Y':= b_1P_1 + b_2P_2 + \cdots + b_qP_q,$$ 
where $b_i = m_i$ if $P_i \not \in \mathbb L$ and $b_i = \max(m_i-1,0)$ if $P_i \in \mathbb L$, is the {\it subscheme of $\mathbb Y$ residual to $\mathbb L$}.  We write $\mathbb Y':=\mathbb Y : \mathbb L$.
\end{defn}

The reduction vector $d$ is then obtained by repeatedly considering residual subschemes.    

\begin{defn}\cite{refCHT} Let $\mathbb Y = m_1P_1 + m_2P_2 + \cdots + m_qP_q \subset \mathbb P^2$.  Let $\mathbb L_1, \mathbb L_2, \ldots, \mathbb L_n$ be a sequence of lines in $\mathbb P^2$, not necessarily distinct.
\begin{enumerate}
\item[(a)] We define fat point schemes $\mathbb Y_0, \mathbb Y_1, \ldots, \mathbb Y_n$ by $\mathbb Y_n := \mathbb Y$ and $\mathbb Y_{j-1} := \mathbb Y_j:\mathbb L_j$.  We say that $\mathbb L_1, \ldots, \mathbb L_n$ {\it totally reduces} $\mathbb Y$ if $\mathbb Y_0$ is empty.
\item[(b)] We also define the vector $d = (d_1, d_2, \ldots, d_n)$ such that $d_k := \sum \{ a_{k,i}: P_i \in \mathbb L_k\}$ where
$$\mathbb Y_k = a_{k,1}P_1 + a_{k,2}P_2 + \cdots + a_{k,q}P_q.$$
The vector $d$ is the {\it reduction vector} for $\mathbb Y$ induced by $\mathbb L_1, \ldots, \mathbb L_n$.
\end{enumerate}
\end{defn}

To illustrate these definitions we give an example.  Note that the order in our residuation process is reversed (but equivalent) from that given in \cite{refCHT}.  We state all results from \cite{refCHT} using the notation given above.

\begin{example}\label{example}
Let $\mathbb Y = \mathbb Y_4 \subset \mathbb P^2$ be the fat point scheme described in the following figure, where the numbers associated to the points represent the multiplicities.  

\hspace{0.4in}\vbox{
\hbox{
\setlength{\unitlength}{0.65cm}
\begin{picture}(4,4)(-1,0)
\put(0,0){\line(1,1){3}}
\put(-.3,-.5){$l_1$}
\put(-.5,.5){\line(1,0){7}}
\put(6.5,.3){$l_3$}
\put(2,3){\line(1,-1){3}}
\put(5,-.5){$l_2$}
\multiput(.5,.5)(.666,.666){3}{\circle*{0.3}}
\multiput(2.5,2.5)(.5,-.5){4}{\circle*{0.3}}
\put(5.5,.5){\circle*{0.3}}
\put(.35,.75){\small 3}
\put(1.016,1.416){\small 1}
\put(1.682,2.082){\small 2}
\put(2.348,2.748){\small 2}
\put(2.848,2.248){\small 1}
\put(3.348,1.748){\small 1}
\put(3.848,1.248){\small 1}
\put(5.35,.75){\small 1}
\put(1.8,-0.5){{$\mathbb Y=\mathbb Y_4$}}
\end{picture}

\setlength{\unitlength}{0.65cm}
\begin{picture}(4,4)(-8,0)
\put(0,0){\line(1,1){3}}
\put(-.3,-.5){$l_1$}
\put(-.5,.5){\line(1,0){7}}
\put(6.5,.3){$l_3$}
\put(2,3){\line(1,-1){3}}
\put(5,-.5){$l_2$}
\put(.5,.5){\circle*{0.3}}
\put(1.833,1.833){\circle*{0.3}}
\put(2.5,2.5){\circle*{0.3}}
\multiput(2.5,2.5)(.5,-.5){4}{\circle*{0.3}}
\put(5.5,.5){\circle*{0.3}}
\put(.35,.75){\small 2}
\put(1.682,2.082){\small 1}
\put(2.348,2.748){\small 1}
\put(2.848,2.248){\small 1}
\put(3.348,1.748){\small 1}
\put(3.848,1.248){\small 1}
\put(5.35,.75){\small 1}
\put(2.2,-0.5){{$\mathbb Y_3$}}
\end{picture}
}

\hbox{
\setlength{\unitlength}{0.65cm}
\begin{picture}(4,4)(-1,0)
\put(0,0){\line(1,1){3}}
\put(-.3,-.5){$l_1$}
\put(-.5,.5){\line(1,0){7}}
\put(6.5,.3){$l_3$}
\put(2,3){\line(1,-1){3}}
\put(5,-.5){$l_2$}
\put(.5,.5){\circle*{0.3}}
\put(1.833,1.833){\circle*{0.3}}
\put(5.5,.5){\circle*{0.3}}
\put(.35,.75){\small 2}
\put(1.682,2.082){\small 1}
\put(5.35,.75){\small 1}
\put(2.2,-0.5){{$\mathbb Y_2$}}
\end{picture}

\setlength{\unitlength}{0.65cm}
\begin{picture}(4,4)(-8,0)
\put(0,0){\line(1,1){3}}
\put(-.3,-.5){$l_1$}
\put(-.5,.5){\line(1,0){7}}
\put(6.5,.3){$l_3$}
\put(2,3){\line(1,-1){3}}
\put(5,-.5){$l_2$}
\put(.5,.5){\circle*{0.3}}
\put(1.833,1.833){\circle*{0.3}}
\put(.35,.75){\small 1}
\put(1.682,2.082){\small 1}
\put(2.2,-0.5){{$\mathbb Y_1$}}
\end{picture}
}
}

\vspace{0.3in}
Given the sequence $\mathbb L_1 = l_1, \mathbb L_2 = l_3, \mathbb L_3 = l_2, \mathbb L_4 = l_1$ of lines $\mathbb L_i$,
$\mathbb Y_{i-1}$ is obtained from $\mathbb Y_i$ by reducing by 1
the multiplicities of all the points on the line $\mathbb L_i$ (and dropping points whose multiplicity becomes $0$).  The output of the reduction procedure with respect to the sequence $\mathbb L_1,\mathbb L_2,\mathbb L_3,\mathbb L_4$
is the reduction vector $d=(2, 3, 4, 8)$: indeed, counting with multiplicity,
$\mathbb Y_4$ has $8$ points on $\mathbb L_4$, $\mathbb Y_3$ has $4$ points on $\mathbb L_3$,
and so on until eventually all of the points have multiplicity 0.  (In \cite{refCHT}, $d$ would have been recorded as $(8, 4, 3, 2)$.)
\end{example}

\begin{thm}\cite{refCHT}
Let $\mathbb Y = m_1P_1+m_2P_2+\cdots+m_qP_q \subset \mathbb P^2$.  If $\mathbb L_1, \ldots, \mathbb L_n$ is a totally reducing sequence of lines for $\mathbb Y$ with associated reduction vector $d$, then there exist sequences $f_d$ and $F_d$ of non-negative integers such that
$$f_d(t) \le H(\mathbb Y, t) \le F_d(t)$$
for each degree $t \geq 0$.
\end{thm}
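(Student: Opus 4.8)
The plan is to argue by induction on the length $n$ of the totally reducing sequence, the engine being the Castelnuovo (residuation) exact sequence comparing $H(\mathbb{Y},\cdot)$ with the Hilbert function of the residual scheme $\mathbb{Y}:\mathbb{L}_n$. First I would record that, for a line $\mathbb{L}$ with defining linear form $F$, one has $I_{\mathbb{Y}}:F = I_{\mathbb{Y}:\mathbb{L}}$: since colon distributes over the primary decomposition $I_{\mathbb{Y}}=\bigcap_i I_{P_i}^{m_i}$, it suffices to observe that $F$ is a unit locally at each $P_i\notin\mathbb{L}$, whereas at each $P_i\in\mathbb{L}$ we have $F\in I_{P_i}\setminus I_{P_i}^2$ (because $(I_{P_i}^2)_1=0$), so $I_{P_i}^{m_i}:F=I_{P_i}^{m_i-1}$. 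Consequently there is a short exact sequence of graded $R$-modules
$$0\longrightarrow\big(R/I_{\mathbb{Y}:\mathbb{L}}\big)(-1)\xrightarrow{\ \cdot F\ }R/I_{\mathbb{Y}}\longrightarrow R/(I_{\mathbb{Y}}+(F))\longrightarrow 0,$$
and comparing $k$-dimensions in degree $t$ gives the identity
$$H(\mathbb{Y},t)=H(\mathbb{Y}:\mathbb{L},t-1)+h_{\mathbb{Y},\mathbb{L}}(t),\qquad h_{\mathbb{Y},\mathbb{L}}(t):=\dim_k\big(R/(I_{\mathbb{Y}}+(F))\big)_t.$$

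Next I would trap the ``trace term'' $h_{\mathbb{Y},\mathbb{L}}(t)$ between two quantities depending only on $d_{\mathbb{L}}:=\sum\{m_i:P_i\in\mathbb{L}\}$. On one side, $R/(I_{\mathbb{Y}}+(F))$ is a cyclic module over $R/(F)\cong k[\mathbb{P}^1]$, so $h_{\mathbb{Y},\mathbb{L}}(t)\le\dim_k k[\mathbb{P}^1]_t=t+1$. On the other side, let $Z\subset\mathbb{L}$ be the zero-dimensional subscheme of $\mathbb{L}$ with multiplicity $m_i$ at each $P_i\in\mathbb{L}$; then $I_Z\subset k[\mathbb{L}]$ is principal, generated in degree $\deg Z=d_{\mathbb{L}}$, so $H(Z,t)=\min(t+1,d_{\mathbb{L}})$. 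Since restricting a form of $I_{\mathbb{Y}}$ to $\mathbb{L}$ only raises its order of vanishing at each $P_i\in\mathbb{L}$, the image of $I_{\mathbb{Y}}$ in $k[\mathbb{L}]$ lies in $I_Z$; hence $R/(I_{\mathbb{Y}}+(F))$ surjects onto $k[\mathbb{L}]/I_Z$ and $h_{\mathbb{Y},\mathbb{L}}(t)\ge H(Z,t)=\min(t+1,d_{\mathbb{L}})$. Thus $\min(t+1,d_{\mathbb{L}})\le h_{\mathbb{Y},\mathbb{L}}(t)\le t+1$.

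Now I would run the induction on $n$. If $n=0$ then $\mathbb{Y}$ is empty, $H(\mathbb{Y},\cdot)\equiv 0$, and we take $f_{()}=F_{()}:=0$. If $n\ge 1$, apply the exact sequence with $\mathbb{L}=\mathbb{L}_n$, noting $d_{\mathbb{L}_n}=d_n$ since $\mathbb{Y}_n=\mathbb{Y}$; as $\mathbb{L}_1,\dots,\mathbb{L}_{n-1}$ totally reduces $\mathbb{Y}_{n-1}=\mathbb{Y}:\mathbb{L}_n$ with reduction vector $d'=(d_1,\dots,d_{n-1})$, the inductive hypothesis supplies non-negative integer sequences with $f_{d'}(s)\le H(\mathbb{Y}_{n-1},s)\le F_{d'}(s)$ for all $s$ (extend both by $0$ in negative degrees). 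Define, for $t\ge 0$,
$$f_d(t):=f_{d'}(t-1)+\min(t+1,d_n),$$
$$F_d(t):=\min\!\left(F_{d'}(t-1)+(t+1),\ \binom{t+2}{2},\ d_1+\cdots+d_n\right),$$
and $f_d(t)=F_d(t):=0$ for $t<0$; these are non-negative integer sequences depending only on $d$. The inequality $f_d(t)\le H(\mathbb{Y},t)$ is immediate from the identity and the lower trace estimate. For $H(\mathbb{Y},t)\le F_d(t)$: the identity and the upper trace estimate give $H(\mathbb{Y},t)\le F_{d'}(t-1)+(t+1)$; also $H(\mathbb{Y},t)\le\binom{t+2}{2}$ trivially; and $H(\mathbb{Y},t)\le\deg\mathbb{Y}=d_1+\cdots+d_n$ because $\mathbb{Y}$ is zero-dimensional with non-decreasing Hilbert function stabilizing at $\deg\mathbb{Y}$, and $\deg\mathbb{Y}=d_1+\cdots+d_n$ since each $P_i$ contributes $m_i+(m_i-1)+\cdots+1=\binom{m_i+1}{2}$ over the course of the reduction. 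This closes the induction.

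The heart of the matter is the trace term: the colon-ideal identity $I_{\mathbb{Y}}:F=I_{\mathbb{Y}:\mathbb{L}}$ and the lower estimate $h_{\mathbb{Y},\mathbb{L}}(t)\ge\min(t+1,d_{\mathbb{L}})$ are what the induction rests on. The upper estimate $h_{\mathbb{Y},\mathbb{L}}(t)\le t+1$ is blunt---the image of $I_{\mathbb{Y}}$ in $k[\mathbb{L}]$ need not be saturated, so in small degrees the trace really can exceed $\min(t+1,d_{\mathbb{L}})$---so the argument above delivers only the asserted existence of bounding sequences $f_d,F_d$. Obtaining the sharper $F_d$ that is actually useful for the $\alpha$-conjectures requires choosing the totally reducing sequence so that the saturation defect of each trace is controlled (morally, running each $\mathbb{L}_k$ through as many points of $\mathbb{Y}_k$ as possible), and I expect that refinement to be the main technical obstacle in the full proof.
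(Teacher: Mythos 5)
Your argument is correct, but note that the paper itself contains no proof of this statement: it is quoted from \cite{refCHT}, and your induction on the length of the totally reducing sequence via the residuation exact sequence
$0\to (R/I_{\mathbb Y:\mathbb L})(-1)\xrightarrow{\cdot F} R/I_{\mathbb Y}\to R/(I_{\mathbb Y}+(F))\to 0$
is essentially the mechanism used there, so you have supplied a valid self-contained proof of the quoted existence statement rather than a genuinely different route. Two remarks on how your construction relates to what the paper actually needs. First, your recursively defined lower bound $f_d(t)=f_{d'}(t-1)+\min(t+1,d_n)$ is exactly the CHT sequence: unwinding the recursion shows $f_d(t)=\#\{(i,j)\in C_d: i+j\le t\}$, i.e.\ $\Delta f_d=diag(d)$, which is the only property of $f_d$ the paper later uses; it would strengthen your write-up to state this identification explicitly. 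Second, as you yourself observe, your $F_d(t)=\min\bigl(F_{d'}(t-1)+(t+1),\binom{t+2}{2},\sum_k d_k\bigr)$ is a cruder upper bound than the $F_d$ of \cite{refCHT}; it suffices for the literal statement, but the subsequent cited result (that $d$ being GMS forces $f_d=H(\mathbb Y)=F_d$) refers to the sharper CHT sequence and would not follow from your $F_d$. Since the present paper only invokes the lower bound $f_d$ and the GMS criterion as imported results, this is a limitation of scope rather than a gap in your proof of the statement as posed.
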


It turns out that the sequences $f_d$ and $F_d$ are straightforward to compute.  In this paper we only need to know how to compute $f_d$ which can be described via the diagonal count vector.  

\begin{defn}\label{def:standardconfig}\cite{refCHT}
Let $v=(v_1,\dots,v_n)$ be a non-negative integer vector.
We define the {\it standard configuration} $C_v$ determined by $v$
to be the set $C_v\subset {\mathbb Z}\times{\mathbb Z}$ 
of all integer lattice points $(i,j)$ with $i\ge0$ and $0\leq j<n$ such that $i<v_{n-j}$.
Thus $C_v$ consists of the $v_{n-j}$ leftmost first quadrant lattice points
of ${\mathbb Z}\times{\mathbb Z}$ on each 
horizontal line with second coordinate $j$ for $0\leq j<n$.
We also define the {\it diagonal count operator} $diag(v)$ of $v$
by $diag(v) := (p_0,p_1,\dots, p_m)$, where
$p_t$ is the number of points in $C_v$ lying on the diagonal line
with the equation $x+y=t$, and $m$ is the maximum value of $t\geq0$
such that $j+v_{n-j}>t$; i.e., such that the line $x+y=t$ intersects $C_v$.
Equivalently,
\[ p_t = \#\{\, j : 0\leq j < n, i+j=t\text{ and }i < v_{n-j} \, \}. \]
\end{defn}

\begin{example}
Let $v=(1,3,4,5)$.  
Then $C_v$ is as follows
\[
  \begin{array}{ccccc}
    \circ &          &          &          & \\
    \circ & \circ & \circ &           & \\
    \circ & \circ & \circ & \circ & \\
    \circ & \circ & \circ & \circ & \circ
  \end{array}
\]
where $v$ can be regarded as giving the row counts for $C_v$
and $diag(v) = (1, 2, 3, 4, 3)$ gives the diagonal counts (along diagonals with slope $-1$).  By abuse of notation, we will consider $diag(v)$ to be a sequence with infinitely many terms by appending zeros on the end.  Thus, $diag(1,3,4,5) = (1, 2, 3, 4, 3, 0,0,0,0, \ldots)$. 
\end{example}

\begin{fact}\cite{refCHT}
Let $\mathbb Y = m_1P_1 + m_2P_2 + \cdots + m_qP_q \subset \mathbb P^2$.  Suppose $\mathbb L_1, \ldots, \mathbb L_n$ is a totally reducing sequence of lines for $\mathbb Y$ with associated reduction vector $d$.  Let the resulting lower bound on $H(\mathbb Y)$ be $f_d = (f_0, f_1, f_2, \ldots)$ and define $\Delta f_d:=(f_0, f_1 - f_0, f_2-f_1, \ldots)$.  With this notation, we have
$$\Delta f_d = diag(d).$$
\end{fact}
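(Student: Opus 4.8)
The plan is to prove the identity by induction on the length $n$ of the reduction vector $d=(d_1,\dots,d_n)$, by showing that $\Delta f_d$ and $diag(d)$ satisfy the same recursion in $n$. The ingredient from \cite{refCHT} that I will use is the way the lower bound $f_d$ is assembled: iterating the residuation exact sequence $0\to I_{\mathbb Y_{j-1}}(-1)\xrightarrow{\,\cdot F_j\,} I_{\mathbb Y_j}\to C_j\to 0$, and using that restriction to $\mathbb L_j$ embeds $C_j$ into $k[\mathbb L_j]\cong k[\mathbb P^1]$ inside the ideal of an effective divisor of degree $d_j$ --- whose quotient has Hilbert function $t\mapsto\min(t+1,d_j)$ --- one gets $H(\mathbb Y_j,t)\ge H(\mathbb Y_{j-1},t-1)+\min(t+1,d_j)$, and taking equality at each step produces $f_d(t)=f_{d'}(t-1)+\min(t+1,d_n)$, where $d'=(d_1,\dots,d_{n-1})$ is the reduction vector induced on $\mathbb Y_{n-1}$ by $\mathbb L_1,\dots,\mathbb L_{n-1}$, with $f_{()}\equiv 0$ and $f_d(t)=0$ for $t<0$. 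Since $d_n\ge 1$, this also gives $f_d(t)\ge 1$ for all $t\ge 0$, so no truncation at $0$ intervenes.

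The next step is to read off the one-step rule on each side. Differencing the recursion for $f_d$ at consecutive values of $t$ gives, for $t\ge 1$, $\Delta f_d(t)=\Delta f_{d'}(t-1)+\bigl(\min(t+1,d_n)-\min(t,d_n)\bigr)=\Delta f_{d'}(t-1)+\mathbf 1[\,t<d_n\,]$, and at $t=0$ one has $\Delta f_d(0)=f_d(0)=1$. On the combinatorial side, Definition~\ref{def:standardconfig} shows that the horizontal rows of $C_d$ carry, from the bottom up, $d_n,d_{n-1},\dots,d_1$ lattice points (row $j$ has $d_{n-j}$ of them), so deleting the bottom row and sliding the rest down by one row turns $C_d$ into $C_{d'}$; because that slide carries the antidiagonal $x+y=t-1$ onto $x+y=t$, we get $diag(d)_t=diag(d')_{t-1}+\mathbf 1[\,(t,0)\in C_d\,]=diag(d')_{t-1}+\mathbf 1[\,t<d_n\,]$, with $diag(d)_0=1$. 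Hence $\Delta f_d$ and $diag(d)$ obey the identical recursion $x_t=x'_{t-1}+\mathbf 1[\,t<d_n\,]$ with $x_0=1$, so the induction closes once the base case $n=1$ (one line, $f_d(t)=\min(t+1,d_1)$, $C_d$ a single row of $d_1$ points) is checked directly; $n=0$ is also immediately valid.

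A slightly cleaner packaging avoids writing down the recursion for $f_d$ explicitly: one checks directly that $f_d(t)$ equals the number of lattice points of $C_d$ lying weakly below the antidiagonal through $(t,0)$. Partitioning $C_d$ by rows, row $j$ contributes $\min\bigl(d_{n-j},\max(0,t+1-j)\bigr)$ such points, and summing over $0\le j<n$ yields exactly the closed form $f_d(t)=\sum_{j=1}^n\min\bigl(d_j,\max(0,\,t-n+j+1)\bigr)$ obtained by unwinding the recursion. Since the number of lattice points of $C_d$ on the antidiagonal $x+y=s$ is $diag(d)_s$ by definition, this says $f_d(t)=\sum_{s=0}^t diag(d)_s$, and differencing gives $\Delta f_d=diag(d)$.

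I do not anticipate a real obstacle; the content is bookkeeping. The one point that needs care is matching the two indexing conventions: the recursion for $f_d$ strips off the line $\mathbb L_n$ applied \emph{first} to $\mathbb Y$, which is the \emph{last} coordinate $d_n$ of the reduction vector, whereas $C_d$ feeds the coordinates of $d$ into its rows in the opposite order, so the ``shift by one in degree'' for $f$ must be aligned with the ``shift by one row'' for $C_d$, and the indicator $\mathbf 1[\,t<d_n\,]$ must emerge from both computations with the same argument. One should also confirm the degenerate cases by hand --- negative degrees, the empty configuration, and a vanishing coordinate $d_j=0$ (impossible for a genuine reduction vector, but harmless in the purely combinatorial identity).
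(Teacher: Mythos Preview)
The paper does not actually prove this statement: it is stated as a \emph{Fact} quoted from \cite{refCHT} with no argument given. Your proposal is therefore not competing with a proof in the paper but rather reconstructing the content behind the citation, and it does so correctly. The recursion $f_d(t)=f_{d'}(t-1)+\min(t+1,d_n)$ you extract from the residuation exact sequence, together with its differenced form $\Delta f_d(t)=\Delta f_{d'}(t-1)+\mathbf 1[t<d_n]$, is exactly the mechanism by which $f_d$ is built in \cite{refCHT}, and your matching recursion for $diag(d)$ via deletion of the bottom row of $C_d$ is straightforward and correct. The alternative ``cleaner packaging'' --- recognizing $f_d(t)$ as the number of lattice points of $C_d$ in the triangle $x+y\le t$, so that $\Delta f_d(t)$ is the count on the antidiagonal $x+y=t$ --- is also valid and is arguably the most transparent way to see the identity. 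Your bookkeeping on the indexing (that peeling off $\mathbb L_n$ removes $d_n$, which sits in the \emph{bottom} row of $C_d$) is handled correctly.
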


For example, the fat point scheme $\mathbb Y$ in Example \ref{example} has reduction vector $d = (2,3,4,8)$ and $diag(d) = (1,2,3,4,4,1,1,1,0,0,0,\ldots)$.  Thus, 
$$H(\mathbb Y) \ge (1, 3, 6, 10, 14, 15, 16, 17, 17, 17, \ldots) = f_d.$$
As we will see below, it turns out that equality holds here.

In addition to $f_d$ and $F_d$ being easy to compute, there is an easy-to-check combinatorial condition on $d$ which guarantees that $f_d$ is equal to $F_d$, giving us an exact calculation of the Hilbert function in this case.   

\begin{defn}\label{be}\cite{refCHT}
A vector $v := (v_1, \ldots, v_q)$ of non-negative integers is a {\it generalized monotone sequence}, or {\it GMS}, if $v_1 \leq v_2 \leq \cdots \leq v_q$ and between any two zero entries of $\Delta v := (v_1, v_2-v_1, \ldots, v_q-v_{q-1})$ there is an entry which is strictly greater than 1.
\end{defn}

\begin{thm}\cite{refCHT}\label{equality}
Let $\mathbb Y = m_1P_1 + m_2P_2 + \cdots + m_qP_q \subset \mathbb P^2$.  If $\mathbb L_1, \ldots, \mathbb L_n$ is a totally reducing sequence of lines for $\mathbb Y$ with associated reduction vector $d$ such that $d$ is GMS, then
$$f_d = H(\mathbb Y) = F_d.$$
\end{thm}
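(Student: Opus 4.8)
The plan is to separate the geometric input from the combinatorial core. By the bounding theorem of \cite{refCHT} recalled above we already have $f_d(t)\le H(\mathbb Y,t)\le F_d(t)$ for every $t\ge 0$, and both $f_d$ and $F_d$ depend only on the reduction vector $d$, not on the actual positions of the points. Hence it suffices to prove the purely combinatorial statement that $f_d=F_d$ whenever $d$ is GMS; once this is established, $H(\mathbb Y)$ is squeezed to their common value. All of the work is in this combinatorial step.

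To set it up I would record the explicit descriptions of the two bounds in terms of $d=(d_1,\dots,d_n)$. Iterating the Castelnuovo-type restriction sequence
\[
0\longrightarrow I_{\mathbb Y_{j-1}}(-1)\xrightarrow{\ \cdot F_j\ } I_{\mathbb Y_j}\longrightarrow I_{Z_j,\mathbb L_j},
\]
with $F_j$ a linear form cutting out $\mathbb L_j$ and $Z_j\subset\mathbb L_j\cong\mathbb P^1$ the fat point scheme of degree $d_j$, gives $H(\mathbb Y_j,t)\ge H(\mathbb Y_{j-1},t-1)+\min(t+1,d_j)$, and unrolling yields
\[
f_d(t)=\sum_{j=1}^{n}\max\bigl(0,\ \min(t-(n-j)+1,\ d_j)\bigr),
\]
which is exactly the Fact recalled above, repackaged as $\Delta f_d=diag(d)$. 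The upper bound $F_d$ is the companion construction of \cite{refCHT}; for the argument I only need that it is again an explicit function of $d$, that $F_d(t)\ge f_d(t)$ for every $t$, and that any strict inequality $F_d(t)>f_d(t)$ is forced by $d$ increasing too sluggishly — concretely, by the presence of two zeros of $\Delta d$ with every entry strictly between them equal to $0$ or $1$.

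The heart of the proof is then a local analysis of $\Delta d$. I would isolate a maximal block of $\Delta d$ of the shape $(0,\varepsilon_1,\dots,\varepsilon_s,0)$ with all $\varepsilon_i\in\{0,1\}$, observe that outside such blocks the two constructions already agree term by term, and compute the relevant diagonal counts on the block alone, showing that $f_d$ and $F_d$ differ there precisely when such a block exists. Since the GMS hypothesis says exactly that between any two zeros of $\Delta d$ there is an entry $\ge 2$ — i.e.\ no such block occurs — this forces $f_d=F_d$. Organizing the case analysis by whether the block sits at the top or bottom of the associated staircase, and by the sizes of the jumps flanking it, reduces everything to a finite computation.

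The main obstacle I anticipate is bookkeeping: the GMS condition is stated on $\Delta d$, whereas $f_d$ is most transparently described through the left-justified staircase $C_d$ together with the shift $n-j$ that the recursion attaches to each $d_j$, so one must translate carefully between ``runs of small differences in $d$'' and ``coincidences of anti-diagonal counts,'' and watch the boundary cases. A more geometric alternative, closer to a self-contained argument, is induction on $n$: passing from $\mathbb Y=\mathbb Y_n$ to the residual $\mathbb Y_{n-1}$ preserves the GMS property of the truncated vector $(d_1,\dots,d_{n-1})$, so by induction $H(\mathbb Y_{n-1})=f_{(d_1,\dots,d_{n-1})}$; one then shows that GMS forces the restriction sequence above to be \emph{right}-exact in every degree where exactness is needed — a vanishing statement of the form $H^1\bigl(\mathbb P^2,\mathcal I_{\mathbb Y_{n-1}}(t-1)\bigr)=0$ for $t$ in the relevant range — so that the recursion defining $f_d$ holds with equality and the upper bound $F_d$ collapses onto it. On that route, deducing the cohomology vanishing from the GMS hypothesis would be the crux.
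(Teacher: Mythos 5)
The statement you are trying to prove is quoted in the paper from \cite{refCHT}; the paper itself gives no proof, so the only question is whether your sketch amounts to one, and at present it does not. Your framing is sound as far as it goes: by the cited bounding theorem $f_d\le H(\mathbb Y)\le F_d$ it suffices to prove the purely combinatorial equality $f_d=F_d$ for GMS vectors $d$, and your rederivation of the lower bound (the restriction sequence giving $H(\mathbb Y_j,t)\ge H(\mathbb Y_{j-1},t-1)+\min(t+1,d_j)$, hence $\Delta f_d=diag(d)$) is correct. But the crucial step is asserted rather than proved: you never state what $F_d$ is (the paper deliberately omits its description, saying only $f_d$ is needed), and the property you claim you ``only need'' --- that a strict inequality $F_d(t)>f_d(t)$ can occur only in the presence of two zeros of $\Delta d$ separated solely by entries in $\{0,1\}$ --- is precisely the content of the theorem. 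Deferring it to an unspecified ``local analysis of $\Delta d$'' that ``reduces everything to a finite computation'' is not a proof; that comparison of the two combinatorial bounds is where all the difficulty of the \cite{refCHT} result lives, and it cannot even be begun without the explicit formula for $F_d$.

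Your alternative geometric route has the same shape of gap. The induction on $n$ is fine in its bookkeeping (truncating the last entry of $d$ does preserve the GMS property), but everything hinges on the vanishing $H^1\bigl(\mathbb P^2,\mathcal I_{\mathbb Y_{n-1}}(t-1)\bigr)=0$ in the relevant degrees, equivalently on surjectivity of the restriction map to the line, and deducing that from the GMS hypothesis is the theorem in disguise; you explicitly flag it as the crux and leave it open. So the proposal is a reasonable plan of attack, with a correct reduction and a correct lower-bound computation, but the central claim (GMS forces $f_d=F_d$, or the equivalent cohomology vanishing) is assumed rather than established.
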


Even when the reduction vector is not GMS, the bounds $f_d \leq H(\mathbb Y) \leq F_d$ immediately give bounds on $\alpha(\mathbb Y)$.  In particular, we have:

\begin{facts}
Let $\mathbb Y = m_1P_1 + m_2P_2 + \cdots + m_qP_q \subset \mathbb P^2$ and $\mathbb L_1, \ldots, \mathbb L_n$ be a totally reducing sequence of lines for $\mathbb Y$ with associated reduction vector $d$.
\begin{enumerate}
\item We have the lower bound 
$$\alpha(diag(d)) = \alpha(\Delta f_d) \leq \alpha(\mathbb Y) = \alpha(\Delta H(\mathbb Y)).$$
\item Suppose $d$ is the non-negative integer vector $d = (d_1, \ldots, d_s)$.  Then  
$$\alpha(diag(d)) = s + \min(0, d_1-1, d_2-2, \ldots, d_s-s).$$
\end{enumerate}
\end{facts}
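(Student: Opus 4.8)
The plan is to dispatch the two parts in turn; neither needs any tool beyond the facts recalled above. For part~(1), the equality $\alpha(diag(d)) = \alpha(\Delta f_d)$ is exactly the Fact that $\Delta f_d = diag(d)$, while $\alpha(\mathbb Y) = \alpha(\Delta H(\mathbb Y))$ is the reformulation of $\alpha$ recalled in Section~\ref{alphaandhilb}; so everything reduces to the middle inequality $\alpha(\Delta f_d) \leq \alpha(\Delta H(\mathbb Y))$, for which I would use only the lower bound $f_d(t) \leq H(\mathbb Y,t)$ from \cite{refCHT}. First note that every entry of $\Delta f_d = diag(d)$ satisfies $(\Delta f_d)_t \leq t+1$, since the diagonal $x+y=t$ meets at most the $t+1$ first-quadrant lattice points $(0,t),(1,t-1),\ldots,(t,0)$. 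Because $\Delta f_d$ has $f_d$ as its sequence of partial sums, telescoping gives $f_d(a) = \sum_{t=0}^{a}(\Delta f_d)_t$, so if $(\Delta f_d)_t = t+1$ for all $t \leq a$ then $f_d(a) = \sum_{t=0}^{a}(t+1) = \binom{a+2}{2}$. Now set $a := \alpha(\mathbb Y)$, so that $H(\mathbb Y,a) < \binom{a+2}{2}$. Were $\alpha(\Delta f_d) > a$, then for every $t \leq a$ the entry $(\Delta f_d)_t$ would be $\leq t+1$ yet not $< t+1$, hence $= t+1$, yielding the contradiction $\binom{a+2}{2} = f_d(a) \leq H(\mathbb Y,a) < \binom{a+2}{2}$. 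Thus $\alpha(\Delta f_d) \leq a$. (Equivalently, the telescoping shows $\alpha(\Delta f_d) = \min\{t : f_d(t) < \binom{t+2}{2}\}$, and the termwise bound $f_d \leq H(\mathbb Y)$ can only decrease this minimum.)

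For part~(2) I would compute $\alpha(diag(d))$ straight from Definition~\ref{def:standardconfig} with $v = d$ and $n = s$. Write $diag(d) = (p_0, p_1, \ldots)$. As above $p_t \leq t+1$, and the task is to find the least $t$ with $p_t < t+1$. For $t \geq s$ this holds automatically, as the diagonal $x+y=t$ meets $C_d$ in at most $s < t+1$ points (only the rows $0 \leq j < s$ occur). For $t < s$ one has $p_t = t+1$ exactly when the whole diagonal $\{(i,t-i) : 0 \leq i \leq t\}$ lies in $C_d$; since row $j$ of $C_d$ occupies the columns $0,\ldots,d_{s-j}-1$, the cell $(t-j,j)$ lies in $C_d$ iff $d_{s-j} > t-j$, and putting $k = s-j$ this says the diagonal is full iff $d_k - k \geq t+1-s$ for every $k$ with $s-t \leq k \leq s$, i.e.\ iff $\min_{s-t \leq k \leq s}(d_k - k) \geq t+1-s$. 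Hence $\alpha(diag(d))$ is the least $t \geq 0$ at which this condition fails.

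It then remains to verify that this least $t$ equals $s + M$, where $M := \min(0, d_1-1, \ldots, d_s-s)$. Note $M \leq 0$ (the entry $0$ is on the list), matching the fact that no diagonal $x+y=t$ with $t \geq s$ is full, so $\alpha(diag(d)) \leq s$. If $M = 0$ then $d_k \geq k$ for all $k$, so for every $t < s$ the displayed condition holds (its left side is $\geq 0 \geq t+1-s$), giving $\alpha(diag(d)) = s = s+M$. If $M < 0$, choose $k^{\ast}$ with $d_{k^{\ast}} - k^{\ast} = M$; from $d_{k^{\ast}} \geq 0$ we get $k^{\ast} \geq -M$, so $k^{\ast}$ lies in the range $s-(s+M) \leq k \leq s$, which shows the condition fails at $t = s+M$; and for every $t < s+M$ we have $t+1-s \leq M \leq \min_{1 \leq k \leq s}(d_k-k) \leq \min_{s-t \leq k \leq s}(d_k - k)$, so the condition still holds. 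Either way $\alpha(diag(d)) = s+M$.

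All of this is elementary, so there is no real conceptual obstacle; the one place that needs care is the index bookkeeping in part~(2) — in particular the reversed ordering convention for reduction vectors adopted here relative to \cite{refCHT}, and keeping track of the sign of $M$ — where an off-by-one slip when translating between the row counts $d_k$, the lattice configuration $C_d$, and the diagonal counts $p_t$ would be easy to make.
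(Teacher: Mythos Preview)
Your argument is correct. The paper states these two items as \emph{Facts} without proof, so there is no proof in the paper to compare against; your proposal fills in exactly the elementary verification the authors omit, using only the inputs they recall (that $\Delta f_d = diag(d)$, that $f_d \leq H(\mathbb Y)$, and Definition~\ref{def:standardconfig}), and the index bookkeeping in part~(2) is handled carefully and correctly.
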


\subsection{A Natural Family of Configurations}\label{naturalfamily}  

We will soon focus our study on special sets of points called line count configurations.  It is natural to investigate Conjectures \ref{THEEVENCONJ} and \ref{THEODDCONJ}  via line count configurations since these give us a large family of point sets which arise in a variety of different situations.  

\begin{defn}
A reduced set of points $\mathbb T = \mathbb T_1 \cup \cdots \cup \mathbb T_q \subset \mathbb P^2$ is a {\it line count configuration of type $c = (c_1, \ldots, c_q)$} if each $\mathbb T_i$ consists of $c_i$ points on a line $\mathbb L_i$ where the lines $\mathbb L_1, \ldots, \mathbb L_q$ are distinct and no point of $\mathbb T$ occurs where two of the lines $\mathbb L_i$ meet.  After re-indexing, we assume that $1 \le c_1 \le c_2 \le \cdots \le c_q$.  Moreover, if $a = (a_1, \ldots, a_q)$ is a vector of positive integers, then we define $\mathbb T(a,c) := a_1 \mathbb T_1 + \cdots + a_q \mathbb T_q$ to be the fat point scheme obtained by adding multiplicity $a_i$ to each point in $\mathbb T_i$. 
\end{defn}

Another benefit of working with line count configurations is that the results of \cite{refCHT} nicely bound the associated Hilbert functions.  We will need some notation in order to give the description of these bounds.

\begin{defn}
Given an integer vector $v=(v_1,\ldots,v_n)$,
we define the {\it permuting operator} $\pi(v)$ to be the vector
whose entries are the entries of $v$ permuted to be in non-decreasing order.
\end{defn}

\begin{defn}\cite{refCHT}
Given vectors $a=(a_1,\ldots,a_n)$ and $m=(m_1,\ldots,m_n)$ of positive integers, let
\[ a\circ m := (1m_1,2m_1,\ldots,a_1m_1,\hspace{1ex} 1m_2,2m_2,\ldots,a_2m_2,\hspace{1ex} \ldots, 
\hspace{1ex} 1m_n,2m_n,\ldots,a_n m_n) . \]
The {\it star operator} is defined by $a*m := \pi(a\circ m)$.
\end{defn}

\begin{example}
We have
\[(3,3,3)*(2,4,5) = \pi((2,4,6,\hspace{1ex} 4,8,12,\hspace{1ex} 5,10,15))=(2,4,4,5,6,8,10,12,15).\] 
\end{example}

\begin{lemma}\cite{refCHT}\label{StarLemma}
Let $\mathbb T = \mathbb T_1 \cup \cdots \cup \mathbb T_q \subset \mathbb P^2$ be a line count configuration of type $m = (m_1, \ldots, m_q)$ supported on the lines $\mathbb L_1, \ldots, \mathbb L_q$.  If $a = (a_1, \ldots, a_q)$ is a vector of positive integers, then $a*m$ is the reduction vector for $\mathbb T(a,m)$ with respect to some totally reducing sequence of lines and hence $f_{a*m}\leq H(\mathbb T(a,m)) \leq F_{a*m}$.  Moreover, if $a*m$ is GMS, then $H(\mathbb T(a,m))=f_{a*m}=F_{a*m}$.
\end{lemma}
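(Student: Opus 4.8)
The plan is to exhibit an explicit totally reducing sequence of lines for $\mathbb{T}(a,m)$ whose reduction vector is precisely $a*m$, and then to quote the results of \cite{refCHT} recalled above: once $a*m$ is known to be such a reduction vector, the bounds $f_{a*m} \le H(\mathbb{T}(a,m)) \le F_{a*m}$ are immediate, and the GMS case follows from Theorem~\ref{equality}.

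First I would isolate the structural feature of line count configurations that makes this work. Since no point of $\mathbb{T}$ lies on an intersection of two of the lines $\mathbb{L}_i$, a point of $\mathbb{T}_j$ cannot lie on $\mathbb{L}_i$ for $i \neq j$; hence every point of $\mathbb{T}$ belongs to exactly one $\mathbb{T}_i$, and residuating a fat point subscheme of $\mathbb{T}(a,m)$ by $\mathbb{L}_i$ lowers by $1$ the multiplicity of each point of $\mathbb{T}_i$ and changes nothing else. So along any sequence of residuations the points of $\mathbb{T}_i$ always share a common multiplicity, equal to $a_i$ minus the number of residuations by $\mathbb{L}_i$ performed so far; in particular $\mathbb{L}_i$ supports $m_i$ points of positive multiplicity up through the $a_i$-th residuation by $\mathbb{L}_i$ and none afterwards. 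It follows that any sequence of lines in which $\mathbb{L}_i$ occurs exactly $a_i$ times — in whatever order — totally reduces $\mathbb{T}(a,m)$, and that a residuation by $\mathbb{L}_i$ preceded by $\ell-1$ earlier residuations by $\mathbb{L}_i$ contributes the entry $(a_i - \ell + 1)m_i$ to the reduction vector. Therefore the multiset of entries of the reduction vector is $\{\, j\,m_i : 1 \le i \le q,\ 1 \le j \le a_i \,\}$, which is exactly the multiset of entries of $a \circ m$, no matter which order of lines is used.

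It remains to choose the order of residuations so that these entries appear in non-decreasing order, making the reduction vector equal $\pi(a \circ m) = a*m$ in the ordering convention of Section~\ref{alphaandhilb}. I would do this greedily: at each step, among all lines still supporting points of positive multiplicity, residuate by one for which the sum of the multiplicities of its points is currently largest. To see that this produces the sorted vector, note that for each $i$ the block of values $a_i m_i > (a_i-1)m_i > \cdots > m_i$ contributed by $\mathbb{L}_i$ is necessarily produced in this decreasing order as the residuations by $\mathbb{L}_i$ occur; hence after any number of steps the values not yet produced form the multiset $\{\, j\,m_i : 1 \le i \le q,\ 1 \le j \le b_i \,\}$, where $b_i$ is the current common multiplicity on $\mathbb{L}_i$, and its maximum $\max_i b_i m_i$ is precisely the largest current such sum. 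So each greedy step removes the largest remaining value, and running the procedure from the first residuation (applied to $\mathbb{Y}_n$, hence producing $d_n$, the global maximum) down to the last (producing $d_1$) yields the reduction vector $a*m$ read in non-decreasing order. With $a*m$ now realized as a reduction vector for a totally reducing sequence of lines for $\mathbb{T}(a,m)$, the inequalities $f_{a*m} \le H(\mathbb{T}(a,m)) \le F_{a*m}$ follow from the quoted theorem of \cite{refCHT}, and when $a*m$ is GMS the equalities $H(\mathbb{T}(a,m)) = f_{a*m} = F_{a*m}$ follow from Theorem~\ref{equality}.

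The only real obstacle I anticipate is the bookkeeping in the greedy step — keeping the ``top-down'' indexing of the chain $\mathbb{Y}_n, \mathbb{Y}_{n-1}, \ldots, \mathbb{Y}_0$ aligned with the left-to-right reading of $d = (d_1, \ldots, d_n)$, and confirming that ties (several lines attaining the largest current sum) cause no problem, since any choice still outputs a copy of the correct value and leaves the rest of the argument intact. Everything else is a direct appeal to the machinery recalled above.
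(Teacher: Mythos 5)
Your proposal is correct. Note that the paper does not prove this lemma at all --- it is quoted from \cite{refCHT} --- so there is no in-paper argument to compare against; what you have done is reconstruct a self-contained proof of the cited result. Your two key steps are both sound: since no point of $\mathbb T$ lies at an intersection of the lines, each $\mathbb T_i$ keeps a common multiplicity throughout, so any sequence using $\mathbb L_i$ exactly $a_i$ times totally reduces $\mathbb T(a,m)$ and yields the multiset of entries $\{\,jm_i : 1\le j\le a_i\,\}$ of $a\circ m$; and the greedy choice (always residuate by a line of currently largest total multiplicity) removes the global maximum $\max_i b_i m_i$ at each step, so with the paper's indexing ($d_n$ produced first, $d_1$ last) the vector comes out non-decreasing and hence equals $\pi(a\circ m)=a*m$, with ties harmless. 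The inequalities and the GMS equality then follow exactly as you say from the quoted bounds of \cite{refCHT} and Theorem~\ref{equality}.
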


In the next section, we will work with line count configurations of type $m = (1, 2, 3, \ldots, t)$ in particular.  These line count configurations are special for a number of reasons:  they will play the role of a base case for the main theorem of the paper and, by the following result independently presented by Chudnovsky \cite{refCh} and Geramita--Maroscia--Roberts \cite{refGMR}, they have very special Hilbert functions.

\begin{thm}\cite[Theorem 2.5]{refGMR}
If $\mathbb B$ is any finite set of points in $\mathbb P^2$, then there exists a subset $\mathbb A \subseteq \mathbb B$ such that 
$$\alpha(\mathbb B) = \alpha(\mathbb A) = reg(I_{\mathbb A}).$$
\end{thm}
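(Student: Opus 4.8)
The plan is to construct $\mathbb{A}$ by hand as a minimal-size subset that still ``sees'' the initial degree, and then read the regularity equality off its Hilbert function. Write $\alpha := \alpha(\mathbb{B})$ and assume $\mathbb{B}\neq\emptyset$ (the empty case being vacuous), so that $\alpha\geq 1$.

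First I would reinterpret the condition $\alpha(\mathbb{B})=\alpha$ in degree $\alpha-1$: it says exactly that $(I_{\mathbb{B}})_{\alpha-1}=0$, and since $(I_{\mathbb{B}})_{\alpha-1}=\bigcap_{P\in\mathbb{B}}\ker\!\big(\mathrm{ev}_P:R_{\alpha-1}\to k\big)$, this is equivalent to the evaluation functionals $\{\mathrm{ev}_P:P\in\mathbb{B}\}$ spanning the dual space $(R_{\alpha-1})^{\ast}$. In particular $|\mathbb{B}|\geq\dim_k R_{\alpha-1}=\binom{\alpha+1}{2}$, which is what makes the next step possible. By ordinary linear algebra I would extract from this spanning set a basis, i.e. choose $\mathbb{A}\subseteq\mathbb{B}$ with $|\mathbb{A}|=\binom{\alpha+1}{2}$ for which $\{\mathrm{ev}_P:P\in\mathbb{A}\}$ is a $k$-basis of $(R_{\alpha-1})^{\ast}$.

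Next I would verify both asserted equalities for this $\mathbb{A}$. Linear independence of $\{\mathrm{ev}_P:P\in\mathbb{A}\}$ says that $\mathbb{A}$ imposes independent conditions on forms of degree $\alpha-1$, so $H(\mathbb{A},\alpha-1)=\binom{\alpha+1}{2}=|\mathbb{A}|$; hence $(I_{\mathbb{A}})_{\alpha-1}=0$ and $\alpha(\mathbb{A})\geq\alpha$, while $\mathbb{A}\subseteq\mathbb{B}$ (so $I_{\mathbb{B}}\subseteq I_{\mathbb{A}}$) gives $\alpha(\mathbb{A})\leq\alpha(\mathbb{B})=\alpha$; thus $\alpha(\mathbb{A})=\alpha=\alpha(\mathbb{B})$. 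For the regularity, I would use that the Hilbert function of a reduced set of points is nondecreasing and bounded above by the number of points, so $H(\mathbb{A},\alpha-1)=|\mathbb{A}|$ forces $H(\mathbb{A},t)=|\mathbb{A}|$ for all $t\geq\alpha-1$. Hence the first difference $\Delta H(\mathbb{A})$ vanishes in every degree $\geq\alpha$ and in degree $\alpha-1$ equals $\binom{\alpha+1}{2}-\binom{\alpha}{2}=\alpha\neq 0$ (using $(I_{\mathbb{A}})_{\alpha-2}=0$), so its last nonzero degree is $\sigma=\alpha-1$. Since a set of points in $\mathbb{P}^2$ is arithmetically Cohen--Macaulay, its Castelnuovo--Mumford regularity is controlled by the $h$-vector via $reg(I_{\mathbb{A}})=\sigma+1$, giving $reg(I_{\mathbb{A}})=\alpha=\alpha(\mathbb{A})$.

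The only step that is not bookkeeping is the identity $reg(I_{\mathbb{A}})=\sigma+1$, which I would justify by cutting $R/I_{\mathbb{A}}$ with a general linear form $\ell$: since the point ideal has no embedded primes and $\ell$ lies in none of the (finitely many) associated primes, $\ell$ is a nonzerodivisor on $R/I_{\mathbb{A}}$, so $reg(R/I_{\mathbb{A}})=reg\big(R/(I_{\mathbb{A}}+\ell R)\big)$; the latter is a graded Artinian quotient of $k[x_0,x_1]$ with Hilbert function $\Delta H(\mathbb{A})$, whose regularity is just its top nonzero degree $\sigma$, and finally $reg(I_{\mathbb{A}})=reg(R/I_{\mathbb{A}})+1$. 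I expect this Cohen--Macaulay / Artinian-reduction step to be the only place where the standing hypotheses on $k$ (infinite, so that a sufficiently general $\ell$ exists) are actually used; everything else is linear algebra over the point configuration.
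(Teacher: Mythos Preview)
The paper does not prove this theorem; it is quoted from \cite{refGMR} and followed only by the informal remark that ``to obtain the set $\mathbb A$ we simply repeatedly select points of $\mathbb B$ such that the newest point selected imposes an independent condition on forms of degree $\alpha(\mathbb B)$,'' together with the observation that $\mathbb A$ then has $\binom{\alpha+1}{2}$ points and generic Hilbert function. Your argument is a correct fleshing-out of exactly this sketch: choosing a basis of evaluation functionals on $R_{\alpha-1}$ is the linear-algebra formalization of ``repeatedly select points imposing independent conditions,'' and your Artinian-reduction computation of $reg(I_{\mathbb A})$ supplies the justification the paper leaves to the cited reference. One minor remark: the paper's informal phrase ``degree $\alpha(\mathbb B)$'' should really be $\alpha(\mathbb B)-1$ to match your (correct) dimension count $\dim_k R_{\alpha-1}=\binom{\alpha+1}{2}$.
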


Let $\alpha = \alpha(\mathbb A)$.  Then $\mathbb A$ must consist of $\binom{\alpha + 1}{2}$ points which have the first difference Hilbert function $diag(1, 2, 3, \ldots, \alpha)$.  That is, $\mathbb A$ has the generic Hilbert function.  To obtain the set $\mathbb A$ we simply repeatedly select points of $\mathbb B$ such that the newest point selected imposes an independent condition on forms of degree $\alpha(\mathbb B)$.  

So, assuming $\alpha(2r \mathbb A) \geq r \alpha(\mathbb A) + r$, then
$$\alpha(2r \mathbb B) \geq \alpha(2r \mathbb A) \geq r \alpha(\mathbb A) + r = r \alpha(\mathbb B) + r.$$

Similarly, assuming $\alpha((2r-1)\mathbb A) \geq r \alpha(\mathbb A) + r-1$, then
$$\alpha((2r-1)\mathbb B) \geq \alpha((2r-1)\mathbb A) \geq r \alpha(\mathbb A) + r -1 = r \alpha(\mathbb B) + r - 1.$$

That is, if Conjectures \ref{THEEVENCONJ} and \ref{THEODDCONJ} hold for $\mathbb A$, then they also hold for $\mathbb B$.  Further observe that line count configurations of type $c = (1, 2, 3, \ldots, t)$ have Hilbert function $diag(1, 2, 3, \ldots, t)$ and that such points have $\alpha = t$.  That is, the Hilbert function of the guaranteed subset $\mathbb A$ has the same form as the Hilbert function of the line count configurations we next consider.

\section{The Key Case}\label{keycase}

In this section we assume $\mathbb W \subset \mathbb P^2$ is a line count configuration of type $c = (1, 2, 3, \ldots, t)$ and consider fat points $\ell \mathbb W$ (where later we will consider the two cases $\ell = 2r-1$ or $\ell=2r$ for some $r \geq 1$).  Our goals are to verify the following inequalities (Conjectures \ref{THEEVENCONJ} and \ref{THEODDCONJ} for $n=2$ and $\mathbb W$):

$$\alpha(2r \mathbb W) \geq r \alpha(\mathbb W) + r$$
and
$$\alpha((2r-1)\mathbb W) \geq r \alpha(\mathbb W) + (r-1).$$

It is easy to see that $H(\mathbb W) = f_c$, and so $\alpha(\mathbb W) = \alpha(diag(c)) = t$.  Furthermore, by Lemma~\ref{StarLemma}, we can find a totally reducing sequence of lines for $\ell \mathbb W$ with associated reduction vector 
$$d = (\ell, \ell, \ldots, \ell) * (1, 2, \ldots, t) := (d_1, d_2, \ldots, d_{\ell t}).$$
Thus $H(\ell \mathbb W) \geq f_d$ and 
$$\alpha(\ell \mathbb W) = \alpha(\Delta H(\ell \mathbb W)) \geq \alpha(f_d) = \alpha(diag(d)) = \ell t +\min(0, d_1-1, d_2-2, \ldots, d_{\ell t}-\ell t).$$
Note that we will prove Conjecture \ref{THEODDCONJ} for $\ell=2r-1$ if we can show that $\alpha(diag(d)) \geq rt+r-1$.  That is, for $\ell = 2r-1$, we need to verify that $\min(0, d_1-1, d_2-2, \ldots, d_{\ell t}-\ell t) \geq r+t-rt-1$.  In the case of $\ell = 2r$, we will prove Conjecture \ref{THEEVENCONJ} if we can show that $\min(0, d_1-1, d_2-2, \ldots, d_{\ell t}-\ell t) \geq r-rt$.  After re-arranging the inequality, we will consider the equivalent bound involving a maximum.  For ease of notation we make the following definition.

\begin{defn}
Throughout this section we will let $\ell$ and $t$ be positive integers and set 
$$d := (\ell, \ell, \ldots, \ell) * (1, 2, \ldots, t) = (d_1, d_2, \ldots, d_{\ell t}).$$
We define the function
$$S(d):= \max(0, 1-d_1, 2-d_2, \ldots, \ell t-d_{\ell t}). $$
\end{defn}

Observe that if $d_{m-1} = d_m$ then \mbox{$(m-1)-d_{m-1} < m-d_m$}.  So, the maximum $S(d)$ occurs at a value $m$ where $d_m < d_{m+1}$.  Also note that if $j$ is an entry of $d$ then the maximum $m$ such that $d_m = j$ is equal to the number of entries in $d$ which are less than or equal to $j$.  Let 
$$\sigma(j):= \#\{\mbox{entries in $d$ which are $\leq j$}\} = \sum_{\substack{1 \leq a \leq \ell \\
1 \leq b \leq t \\
ab \leq j}} 1.$$
We form an upper bound for $S(d)$ by taking the maximum of $\sigma(j)-j$ over all $j$ such that $1 \leq j \leq \ell t$, rather than considering only the entries appearing in $d$.  This clearly gives the bound 
$$S(d) \leq \max_{1 \leq j \leq \ell t}(\sigma(j)-j).$$
In fact, equality holds.  

\begin{lemma}\label{SattainsMax}
$\displaystyle S(d) = \max_{1 \leq j \leq \ell t}(\sigma(j)-j)$.
\end{lemma}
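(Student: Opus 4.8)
We already have the inequality $S(d) \leq \max_{1 \leq j \leq \ell t}(\sigma(j)-j)$ from the discussion preceding the lemma, so the plan is to establish the reverse inequality $S(d) \geq \max_{1 \leq j \leq \ell t}(\sigma(j)-j)$. Fix a value $j_0 \in \{1, \ldots, \ell t\}$ attaining the maximum on the right. The key observation is that $\sigma(j_0)$ counts exactly those entries of $d$ that are $\leq j_0$; by the remark just before the statement, $\sigma(j_0)$ equals the largest index $m$ with $d_m \leq j_0$ (when $j_0$ actually occurs as an entry) or, more robustly, $\sigma(j_0) = \#\{m : d_m \leq j_0\}$ always.

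The heart of the argument: I would set $m_0 := \sigma(j_0)$ and show that $m_0 - d_{m_0} \geq \sigma(j_0) - j_0$, i.e. that $d_{m_0} \leq j_0$. Since $d$ is sorted in non-decreasing order and exactly $m_0 = \sigma(j_0)$ of its entries are $\leq j_0$, the $m_0$-th entry $d_{m_0}$ is among those $\leq j_0$ (it is the largest entry that is $\leq j_0$), so indeed $d_{m_0} \leq j_0$. Hence $m_0 - d_{m_0} \geq m_0 - j_0 = \sigma(j_0) - j_0$, and since $S(d) = \max(0, 1-d_1, \ldots, \ell t - d_{\ell t}) \geq m_0 - d_{m_0}$ (noting $1 \leq m_0 \leq \ell t$, which holds because $\sigma(j_0) \geq \sigma(1) \geq 1$ as $d_1 = 1 \leq 1$, and $\sigma(j_0) \leq \ell t$ trivially), we conclude $S(d) \geq \sigma(j_0) - j_0 = \max_{1 \leq j \leq \ell t}(\sigma(j)-j)$. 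Combined with the already-known upper bound, this gives equality.

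A small subtlety to handle carefully is the edge case where the maximum of $\sigma(j) - j$ is attained at a value $j_0$ with $\sigma(j_0) = 0$; but $\sigma(1) \geq 1$ always (since $1 \cdot 1 = 1 \leq 1$ contributes), so $\sigma(j_0) \geq 1$ for all $j_0 \geq 1$, and $m_0$ is a legitimate index. One should also double-check that when $\sigma(j_0) = \ell t$ the index $m_0 = \ell t$ is still in range, which it is. I do not expect a serious obstacle here — the statement is essentially a clean reformulation of "the running maximum of $m - d_m$ over sorted $d$ can be computed by scanning threshold values $j$ instead of entries," and the only real content is matching up $\sigma(j_0)$ with the correct index into $d$. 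If one prefers, the same conclusion can be reached by observing that for the maximizing $j_0$ one may assume $j_0$ is itself an entry of $d$ (replacing $j_0$ by the largest entry $\leq j_0$ only decreases $j_0$ while leaving $\sigma(j_0)$ unchanged, hence does not decrease $\sigma(j_0) - j_0$), after which the identification with some $d_m < d_{m+1}$ from the preliminary discussion is immediate.
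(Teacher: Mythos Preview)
Your proof is correct and follows essentially the same idea as the paper's: both arguments hinge on the observation that the maximum of $\sigma(j)-j$ over $1\le j\le \ell t$ is already attained at an actual entry of $d$, by replacing a non-entry $j_0$ with the largest entry $d_{m_0}\le j_0$ (your $d_{m_0}$ is exactly the paper's $d_{\hati}$). The paper packages this as a short proof by contradiction, while you argue directly via $m_0-d_{m_0}\ge \sigma(j_0)-j_0$; the content is identical, and indeed your closing alternative is verbatim the paper's argument.
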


\begin{proof}
Suppose that $\displaystyle \max_{1 \leq j \leq \ell t}(\sigma(j)-j)$ is attained at some $\hatj \not \in d$.   Let $d_{\hati}$ be the largest $d_i < \hatj$.  Then $\sigma(\hatj) = \sigma(d_{\hati})$ and so
$$\sigma(\hatj)-\hatj < \sigma(d_{\hati}) - d_{\hati},$$
contradicting that $\displaystyle \max_{1 \leq j \leq \ell t}(\sigma(j)-j)$ is attained at $\hatj$.
\end{proof}

For $1 \leq b \leq t$, let $D_b := \{ab : 1 \leq a \leq \ell\}$.  Note that $d = \cup_{b=1}^t D_b$ as an unordered multiset.  Let $\chi_{b}(j):=\left|D_{b}\cap\{1,2,\ldots,j\}\right|$;
then 
\begin{align*}
\sigma(j) & =\sum_{b=1}^{t}\left|D_{b}\cap\{1,2,\ldots,j\}\right|=\sum_{b=1}^{t}\chi_{b}(j).
\end{align*}
Note that 
\[
\chi_{b}(j)=\begin{cases}
\lfloor j/b \rfloor & \textrm{if $j<b\ell$,}\\
\ell & \textrm{if $j\ge b\ell$.}\end{cases}\]
Thus, we are interested in 
$$S(d) = \max_{1 \leq j \leq \ell t}\left(\left(\sum_{b=1}^t \chi_{b}(j)\right) - j\right).$$
We focus our attention on the function 
$$\Phi_t(j) := \left(\sum_{b=1}^t \chi_b(j)\right) - j.$$
Our main theorem hinges on the following bounds for $\Phi_t(j)$:

\begin{claim}\label{THECLAIM}  
Let $t$, $\ell$, and $j$ be positive integers.  If one of $t$ or $\ell$ is 2 and the other is even, then
$$\Phi_t(j) \leq \frac{1}{2} \ell(t-1).$$
Otherwise,
$$\Phi_t(j) \leq \frac{1}{2} (\ell-1)(t-1).$$
\end{claim}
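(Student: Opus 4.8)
The plan is to analyze the function $\Phi_t(j) = \left(\sum_{b=1}^t \chi_b(j)\right) - j$ by exploiting the explicit piecewise formula for $\chi_b(j)$ and reducing to a small number of cases according to the size of $j$ relative to the breakpoints $b\ell$. First I would observe that since each $\chi_b$ is non-decreasing and piecewise of the form $\lfloor j/b \rfloor$ (with slope $1/b \le 1$) before saturating at the constant $\ell$, the function $\Phi_t$ is a sawtooth: it increases by $-1 + \#\{b : b \mid j,\ j < b\ell\}$ at most integer steps, so its local maxima occur precisely at integers $j$ that are divisible by several of the $b$'s. This suggests that the extremum is attained either at $j$ near $\ell$ (where many small $b$ divide $j$) or at a value like $j = \operatorname{lcm}$ of an initial segment, but crucially $\Phi_t$ is eventually decreasing once $j$ exceeds all the breakpoints $b\ell$, since then $\sum_b \chi_b(j) = t\ell$ is constant and $\Phi_t(j) = t\ell - j$ drops. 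So only finitely many candidate maxima need to be examined.

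The key steps, in order, would be: (1) Establish the crude bound $\sum_{b=1}^t \chi_b(j) \le \sum_{b=1}^t \min(\lfloor j/b\rfloor, \ell)$ and, by dropping the floor, $\Phi_t(j) \le \left(\sum_{b=1}^t \min(j/b, \ell)\right) - j$. (2) Treat the continuous relaxation $g(x) = \sum_{b=1}^t \min(x/b,\ell) - x$ on $x > 0$: it is piecewise linear with breakpoints at $x = b\ell$, concave on each piece only in the weak sense, but its slope is $\sum_{b : b\ell > x} 1/b - 1$, which is positive for small $x$ and negative for large $x$; so the maximum of $g$ is at the breakpoint $x = b^*\ell$ where the slope changes sign, i.e. where $\sum_{b=1}^{b^*} 1/b \le 1 < \sum_{b=1}^{b^*-1}1/b$ — but wait, that inequality rarely holds cleanly, so actually the sign change happens at the breakpoint $x = b^*\ell$ with $\sum_{b > b^*} 1/b < 1 \le \sum_{b \ge b^*} 1/b$. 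Evaluating $g(b^*\ell) = b^*\ell + \ell\sum_{b > b^*} \frac{1}{b}\cdot\frac{1}{1}$... more carefully, $g(b^*\ell) = \sum_{b \le b^*}\ell + \sum_{b > b^*} b^*\ell/b - b^*\ell = (t - b^*)\ell\cdot\frac{b^*}{?}$ — I would compute this sum exactly and bound it by $\frac12(\ell-1)(t-1)$. (3) Handle the floor corrections: going from $x/b$ to $\lfloor j/b\rfloor$ loses at most $(b-1)/b$ per term, but these losses must be accounted for carefully because the bound $\frac12(\ell-1)(t-1)$ versus $\frac12\ell(t-1)$ differ by exactly $\frac12(t-1)$, so the exceptional case ($t=2$ or $\ell=2$, the other even) is exactly where the floor/parity slack is maximal and the tighter bound fails by $\frac12(t-1)$ or $\frac{\ell}{2}\cdot\frac12$ respectively. (4) In the exceptional cases $t = 2$ and $\ell = 2$ separately, compute $\Phi_2(j) = \lfloor j\rfloor + \lfloor j/?\rfloor\dots$ directly — when $t=2$, $\Phi_2(j) = \chi_1(j) + \chi_2(j) - j = \min(j,\ell) + \min(\lfloor j/2\rfloor, \ell) - j$, which is maximized around $j = \ell$ (if $\ell$ even, giving $\ell + \ell/2 - \ell = \ell/2 = \frac12\ell(t-1)$) — and verify the stated bound is tight there.

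The main obstacle I expect is step (3)–(4): controlling the floor functions precisely enough to get the \emph{sharp} constant. The continuous relaxation will hand over the weaker-looking bound with little effort, but the paper distinguishes two constants differing by exactly $\frac12(t-1)$, and the whole point is that this slack is realized only in the narrow exceptional regime. So the real work is a careful residue/parity analysis: writing $j = qb + s$ and tracking $\sum_b (s_b/b)$ where $s_b$ is the residue of $j$ mod $b$, one must show that for $j$ near the optimum $b^*\ell$ the sum of fractional parts $\sum_b \{j/b\}$ is bounded below by something like $\frac{t-1}{2}$ except when $t=2$ or $\ell=2$. I would likely organize this via a discrete convexity / finite-difference argument: show $\Phi_t(j) - \Phi_t(j-1) = -1 + \#\{b \le t : b \mid j,\ b\ell > j\}$ and argue that to sustain a large value of $\Phi_t$ one needs a long run where this difference is $\ge 0$, forcing $j$ to be divisible by $2, 3, \dots$, i.e. $j \ge \operatorname{lcm}(2,\dots,k)$, which grows too fast relative to $\ell t$ unless the configuration is degenerate — pinning down exactly the $t=2$, $\ell=2$ boundary cases as the only survivors.
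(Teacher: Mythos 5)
There is a genuine gap, and it sits exactly where your plan defers the work. Your step (2) asserts that the continuous relaxation $g(x)=\sum_{b=1}^t\min(x/b,\ell)-x$ can be bounded by $\tfrac12(\ell-1)(t-1)$, with the floor corrections only needed to separate the two constants in the exceptional $t=2$/$\ell=2$ cases. That is false for small $\ell$: the maximum of $g$ occurs at a breakpoint $k\ell$ and equals $k\ell\sum_{b=k+1}^t\frac1b$, and already for $\ell=3$, $t=3$ one gets $g(3)=3\bigl(\tfrac12+\tfrac13\bigr)=\tfrac52>2=\tfrac12(\ell-1)(t-1)$; the same failure persists for $\ell=3$ and every $t\ge 3$ (e.g.\ $t=4$: $g(6)=6\bigl(\tfrac13+\tfrac14\bigr)=\tfrac72>3$), and for $\ell=4$ at moderate $t$ as well. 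Indeed the claimed bound is attained with equality at interior points (e.g.\ $\Phi_3(3)=2=\tfrac12(\ell-1)(t-1)$ when $\ell=3$), so any estimate that discards the floors must overshoot there. Consequently the ``real work'' you acknowledge in step (3) --- a uniform lower bound on the fractional-part loss $\sum_{b>k}\{j/b\}$ in the critical range --- is not a refinement of an already-sufficient bound; it \emph{is} the theorem, and your proposal neither formulates the correct quantitative statement (the needed saving depends on $k=\lfloor j/\ell\rfloor$, not a blanket $\tfrac{t-1}{2}$, and only the terms with $b>k$ contribute) nor proves it. The closing heuristic is also off: the finite difference is $-1+\#\{b\le t: b\mid j,\ b\ell>j\}$, and large values of $\Phi_t$ do not require a single long run of nonnegative differences (hence no $\mathrm{lcm}(2,\dots,k)$ growth obstruction); different $j$'s are divisible by different $b$'s, and the true maximum of $\Phi_t$ really is of order a constant times $\ell t$, uncomfortably close to the bound when $\ell$ is small.

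By contrast, the paper never relaxes the floors globally. It splits on the size of $j$ (the easy ranges $j\ge\ell t$ and $j<\ell$, and the special lines $t=2$ and $\ell=2$, which match your step (4)), and then proves the main range $\ell\le j<\ell t$, $t,\ell\ge3$ by induction on $t$ with a case analysis on $k$ where $k\ell\le j<(k+1)\ell$: for $k\le\tfrac12(t-2)$ it peels off $\lfloor j/t\rfloor\le\tfrac12(\ell-1)$ and applies the inductive hypothesis; for $k\ge\tfrac12(t+1)$ it uses $\lfloor j/b\rfloor\le\ell-1$ for $b>k$; and for the middle $k$ it performs an exact floor/parity analysis (the two ``Necessary Bounds'' handling $\ell$ even and odd), plus finitely many small checks. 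If you want to salvage your outline, you would need to replace step (2) by an honest treatment of the floors in the middle range --- either along the paper's inductive lines or via the Dirichlet hyperbola-type computation the authors mention from their original version.
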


The proof of Claim \ref{THECLAIM} will follow from a series of preliminary bounds, namely Lemmas \ref{jbig}---\ref{l=2} and Proposition~\ref{middlej}.  The proof presented here was suggested by the anonymous referee.  Our original proof used Dirichlet's hyperbola method and determined the values of $j$ where $\Phi_t(j)$ is maximized.  A manuscript containing the original proof is posted on the arXiv.\footnote{\url{http://arxiv.org/abs/1312.4147v1}}

\begin{lemma}\label{jbig}
If $j \geq \ell t$, then $\Phi_t(j) \leq \frac{1}{2}(\ell-1)(t-1)$.
\end{lemma}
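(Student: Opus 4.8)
The plan is to evaluate $\Phi_t(j)$ directly in the regime $j \geq \ell t$, where every $\chi_b(j)$ has already saturated. First I would observe that for $j \geq \ell t \geq \ell b$ for all $1 \leq b \leq t$, the case analysis in the formula for $\chi_b(j)$ always lands in the second branch, so $\chi_b(j) = \ell$ for every $b$. Hence $\sum_{b=1}^t \chi_b(j) = \ell t$, and therefore
$$\Phi_t(j) = \ell t - j \leq \ell t - \ell t = 0.$$

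It then remains only to check that $0 \leq \frac{1}{2}(\ell-1)(t-1)$, which is immediate since $\ell$ and $t$ are positive integers, so $\ell - 1 \geq 0$ and $t - 1 \geq 0$. Combining, $\Phi_t(j) \leq 0 \leq \frac{1}{2}(\ell-1)(t-1)$, as claimed.

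Honestly there is no real obstacle here: the lemma is a warm-up that disposes of the large-$j$ tail, where $S(d)$-type quantities are governed by the trivial bound $\sigma(j) - j \leq \ell t - j$. The only thing to be slightly careful about is the boundary inequality $j \geq \ell t$ versus $j \geq \ell b$; since $b \leq t$ we have $\ell b \leq \ell t \leq j$, so the saturation genuinely holds for all $b$ simultaneously. The substantive work for Claim~\ref{THECLAIM} will be in the complementary ranges of $j$ (the middle and small regimes handled by Lemmas~\ref{jbig}--\ref{l=2} and Proposition~\ref{middlej}), where the floor functions $\lfloor j/b\rfloor$ are active and one must sum them carefully — that is where the Dirichlet-hyperbola-style estimates and the parity refinement $\frac{1}{2}\ell(t-1)$ versus $\frac{1}{2}(\ell-1)(t-1)$ come into play.
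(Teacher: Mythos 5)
Your proof is correct and is essentially the same as the paper's: both observe that for $j \geq \ell t$ every $\chi_b(j)$ saturates at $\ell$, so $\Phi_t(j) = \ell t - j \leq 0 \leq \tfrac{1}{2}(\ell-1)(t-1)$.
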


\begin{proof}
We have $j \geq \ell t \geq \ell(t-1) \geq \cdots \geq \ell$.  So, by the definition of $\chi_b(j)$, 
$$\Phi_t(j) = \ell t - j \leq 0 \leq \frac{1}{2}(\ell - 1 )(t-1).$$ 
\end{proof}

\begin{lemma}\label{jsmall}
If $1 \leq j < \ell$, then $\Phi_t(j) \leq \frac{1}{2}(\ell - 1)(t-1)$.
\end{lemma}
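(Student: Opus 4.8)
The plan is to observe that the hypothesis $j<\ell$ places $j$ strictly to the left of every ``saturation threshold'' $b\ell$, so that $\Phi_t(j)$ collapses to an elementary sum of floors that can be bounded crudely. Concretely, first I would note that for each $b$ with $1\le b\le t$ we have $j<\ell\le b\ell$, so the case formula for $\chi_b$ gives $\chi_b(j)=\lfloor j/b\rfloor$ in \emph{every} term. Hence
\[
\Phi_t(j)=\sum_{b=1}^{t}\lfloor j/b\rfloor-j=\sum_{b=2}^{t}\lfloor j/b\rfloor ,
\]
since the $b=1$ term equals $j$ and cancels the $-j$.

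Next I would bound the remaining sum termwise. For $b\ge 2$ we have $\lfloor j/b\rfloor\le j/b\le j/2\le (\ell-1)/2$, using $j\le\ell-1$. Summing over the $t-1$ indices $b=2,\dots,t$ yields
\[
\Phi_t(j)\le \frac{(\ell-1)(t-1)}{2},
\]
which is the desired inequality. (One could instead factor out $j$ and invoke the harmonic estimate $\sum_{b=2}^{t}1/b\le (t-1)/2$, but the pointwise bound $1/b\le 1/2$ makes that unnecessary.) Note that this is the stronger of the two bounds appearing in Claim~\ref{THECLAIM}, so the small-$j$ range causes no difficulty for either case of that claim.

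There is essentially no obstacle here; the only points requiring a moment's care are that the entire interval $1\le j<\ell$ lies below all the thresholds $b\ell$ (so no $\chi_b$ has saturated to the constant value $\ell$), and that it is precisely the cancellation of the $b=1$ term against $-j$ that leaves a sum small enough to meet the bound. I would also remark in passing that the estimate is sharp --- for instance, at $t=2$ with $\ell$ odd and $j=\ell-1$ one gets equality --- which is consistent with the sharp bounds needed later, though only the inequality is used in what follows.
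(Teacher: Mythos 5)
Your proof is correct and follows essentially the same route as the paper: since $j<\ell\le b\ell$, every $\chi_b(j)$ equals $\lfloor j/b\rfloor$, the $b=1$ term cancels the $-j$, and the remaining $t-1$ terms are each at most $j/2\le(\ell-1)/2$. No issues.
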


\begin{proof}
For each $1 \leq b \leq t$, we have $\chi_b(j) = \lfloor j/b \rfloor$.  Thus, since $\frac{j}{t} \leq \frac{j}{t-1} \leq \cdots \leq \frac{j}{2}$, we have 
$$\Phi_t(j) = \left(\sum_{b=1}^t \chi_b(j)\right) - j = \left(j + \left\lfloor \frac{j}{2} \right\rfloor + \cdots + \left\lfloor \frac{j}{t} \right\rfloor\right) - j \leq \sum_{b=2}^t\frac{j}{b} \leq \frac{j}{2}(t-1) \leq \frac{1}{2}(\ell - 1)(t-1).$$
\end{proof}

\begin{lemma}\label{t=2}
Let $t = 2$ and suppose $\ell$ and $j$ are positive integers such that $\ell \leq j < 2 \ell$.  We have the following bounds:
\begin{itemize}
\item if $\ell = 2m+1$ for some positive integer $m$, then $\Phi_t(j) \leq \frac{1}{2}(\ell - 1)$;
\item if $\ell = 2m$ for some positive integer $m$, then $\Phi_t(j) \leq \frac{1}{2}\ell$.
\end{itemize}
\end{lemma}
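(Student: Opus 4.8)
The plan is to compute $\Phi_t(j) = \Phi_2(j)$ directly in the regime $\ell \le j < 2\ell$. In this range, for $b = 1$ we have $\chi_1(j) = j$ (since $j < 2\ell = b\ell$ would require... actually $j \ge \ell = 1\cdot\ell$, so $\chi_1(j) = \ell$), and for $b = 2$ we have $j < 2\ell = b\ell$, so $\chi_2(j) = \lfloor j/2 \rfloor$. Wait — I need to recheck the cases in the formula for $\chi_b(j)$: it is $\lfloor j/b\rfloor$ if $j < b\ell$ and $\ell$ if $j \ge b\ell$. So for $b=1$: $j \ge \ell = 1\cdot\ell$ forces $\chi_1(j) = \ell$. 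For $b = 2$: $j < 2\ell$ forces $\chi_2(j) = \lfloor j/2\rfloor$. Hence $\Phi_2(j) = \ell + \lfloor j/2\rfloor - j = \ell - \lceil j/2 \rceil$.

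With this closed form, the maximum over $\ell \le j < 2\ell$ is attained at the smallest admissible $j$, namely $j = \ell$, giving $\Phi_2(\ell) = \ell - \lceil \ell/2\rceil = \lfloor \ell/2 \rfloor$. First I would split on the parity of $\ell$: if $\ell = 2m+1$, then $\lfloor \ell/2\rfloor = m = \frac{1}{2}(\ell-1)$, and since $\Phi_2(j) = \ell - \lceil j/2\rceil$ is non-increasing in $j$, we get $\Phi_2(j) \le \frac{1}{2}(\ell-1)$ throughout the range. If $\ell = 2m$, then $\lfloor \ell/2\rfloor = m = \frac{1}{2}\ell$, and likewise $\Phi_2(j) \le \frac{1}{2}\ell$ for all $j$ in the range. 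This yields exactly the two claimed bounds.

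The argument is essentially a one-line computation once the piecewise formula for $\chi_b$ is applied correctly, so there is no serious obstacle here. The only point requiring a moment's care is getting the boundary of the $\chi_b$ cases right — specifically that $j = \ell$ already falls into the "$j \ge b\ell$" branch for $b = 1$ (so $\chi_1(j) = \ell$, not $\lfloor j/1 \rfloor = j$; these happen to agree only when $j = \ell$, but for $j > \ell$ the distinction matters) and that $j < 2\ell$ keeps us in the "$j < b\ell$" branch for $b = 2$. I would state the closed form $\Phi_2(j) = \ell - \lceil j/2 \rceil$ explicitly, observe it is monotone non-increasing in $j$, evaluate at $j = \ell$, and conclude by parity. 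This lemma, together with Lemmas~\ref{jbig} and~\ref{jsmall} (which cover $j \ge \ell t$ and $j < \ell$) and the symmetric statement for $\ell = 2$, will handle all the small-dimensional edge cases of Claim~\ref{THECLAIM}, leaving the bulk range $\ell \le j < \ell t$ with $t \ge 3$ to Proposition~\ref{middlej}.
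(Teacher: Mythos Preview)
Your proof is correct and follows essentially the same route as the paper: both compute $\Phi_2(j)=\ell+\lfloor j/2\rfloor-j$ in the range $\ell\le j<2\ell$ and observe the maximum occurs at $j=\ell$. The only cosmetic difference is that you rewrite this as $\ell-\lceil j/2\rceil$ and split on the parity of $\ell$ directly, whereas the paper bounds $\lfloor j/2\rfloor\le j/2$ to get $\Phi_2(j)\le \ell/2$ and then invokes integrality of $\Phi_2(j)$ to sharpen to $(\ell-1)/2$ when $\ell$ is odd.
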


\begin{proof}
Since $\ell \leq j < 2 \ell$, we have
$$\Phi_t(j) = \ell + \left\lfloor \frac{j}{2} \right\rfloor - j \leq \ell + \frac{j}{2} - j = \ell - \frac{j}{2} \leq \ell - \frac{\ell}{2} = \frac{1}{2} \ell.$$
Moreover, if $\ell = 2m+1$ for some integer $m$ then, since $\Phi_t(j)$ is an integer,
$$\Phi_t(j) \leq \frac{1}{2}(\ell-1).$$
\end{proof}

\begin{lemma}\label{l=2}
Let $\ell = 2, t \geq 3$ and $2 \leq j \leq 2t-1$.  
\begin{itemize}
\item If $t \geq 4$ is even, then $\Phi_t(j) \leq t-1$;
\item if $t \geq 3$ is odd, then $\Phi_t(j) \leq \frac{1}{2}(t-1)$.
\end{itemize}
\end{lemma}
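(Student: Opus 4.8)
The role of this lemma is to handle the $\ell = 2$ case of Claim~\ref{THECLAIM} on the middle range $2 \le j \le 2t-1$ (the tails $1 \le j < 2$ and $j \ge 2t$ being covered by Lemmas~\ref{jsmall} and~\ref{jbig}), and the two bounds asserted are exactly $\frac{1}{2}\ell(t-1)$ when $t$ is even and $\frac{1}{2}(\ell-1)(t-1)$ when $t$ is odd.  The plan is to exploit the fact that specializing to $\ell = 2$ collapses $\chi_b(j)$ to $\min(\lfloor j/b \rfloor, 2)$: if $j < 2b$ then $\lfloor j/b \rfloor \le 1$, agreeing with $\chi_b(j) = \lfloor j/b\rfloor$, while if $j \ge 2b$ then $\chi_b(j) = 2 = \min(\lfloor j/b \rfloor, 2)$.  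Set $k := \lfloor j/2 \rfloor$; the hypothesis $2 \le j \le 2t-1$ gives $1 \le k \le t-1$, so every threshold we care about lies in $\{1, \ldots, t\}$.

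First I would partition $\{1, \ldots, t\}$ by the value of $\chi_b(j)$.  For $1 \le b \le k$ we have $2b \le j$, hence $\chi_b(j) = 2$; for $k < b \le \min(j,t)$ we have $j/2 < b \le j$, hence $\chi_b(j) = 1$; and for $j < b \le t$ we have $\chi_b(j) = 0$.  Since $k \le t-1$, counting the terms of each type gives
$$\sum_{b=1}^t \chi_b(j) = 2k + \bigl(\min(j,t) - k\bigr) = k + \min(j,t),$$
so $\Phi_t(j) = k + \min(j,t) - j$.

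Next I would split on whether $j \le t$ or $j > t$.  When $2 \le j \le t$ the formula reduces to $\Phi_t(j) = k = \lfloor j/2 \rfloor \le \lfloor t/2 \rfloor$.  When $t < j \le 2t-1$ it reduces to $\Phi_t(j) = t - \bigl(j - \lfloor j/2\rfloor\bigr) = t - \lceil j/2 \rceil$, which is non-increasing in $j$ and hence maximized at $j = t+1$, giving $t - \lceil (t+1)/2 \rceil$.  To conclude, I would distinguish the parity of $t$: if $t$ is odd then $\lfloor t/2 \rfloor = \frac{1}{2}(t-1)$ and $t - \lceil (t+1)/2 \rceil = \frac{1}{2}(t-1)$, so $\Phi_t(j) \le \frac{1}{2}(t-1)$ throughout; if $t$ is even (so $t \ge 4$) then $\lfloor t/2 \rfloor = \frac{1}{2}t$ and $t - \lceil (t+1)/2 \rceil = \frac{1}{2}t - 1$, both of which are $\le t-1$, so $\Phi_t(j) \le t-1$ throughout.

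The argument is elementary from start to finish; the only step that demands care is the floor/ceiling bookkeeping in the partition, together with checking that the boundary value $j = t$ is treated consistently by both cases (both formulas return $\lfloor t/2 \rfloor$ there).  I do not expect a genuine obstacle beyond keeping the two parity computations straight.
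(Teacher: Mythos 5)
Your argument is correct, and it takes a cleaner route than the paper's, though both rest on the same specialization: for $\ell=2$ each $\chi_b(j)=\min(\lfloor j/b\rfloor,2)$, so the whole question is how many $b\in\{1,\dots,t\}$ contribute $2$, $1$, or $0$. The paper organizes this as a case analysis: it writes $j$ as $2k$ or $2k+1$ with $1\le k\le t-1$, bounds $\Phi_t(j)\le (2k-j)+(t-k)\le t-1$ for even $t$, and for odd $t$ splits further according to the parity of $j$ and whether $k$ is above or below $\frac{t-1}{2}$, estimating each subcase separately. You instead extract the exact closed form $\Phi_t(j)=\lfloor j/2\rfloor+\min(j,t)-j$, after which the lemma reduces to two one-line parity computations. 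Your bookkeeping checks out: $b\le\lfloor j/2\rfloor$ forces $2b\le j$ and hence $\chi_b=2$; $\lfloor j/2\rfloor<b\le\min(j,t)$ gives $2b>j\ge b$ and hence $\chi_b=1$; $b>j$ gives $0$; and $\lfloor j/2\rfloor\le t-1$ guarantees all three ranges fit inside $\{1,\dots,t\}$ with nonnegative counts. A side benefit of your formula is that for even $t$ it yields the sharper bound $\Phi_t(j)\le \frac{t}{2}$ on this range, whereas the paper proves (and only needs) $t-1$; either bound suffices for Claim~\ref{THECLAIM}.
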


\begin{proof}
Let $k \in \{1, \ldots, t-1\}$ be an integer, and let $2k \leq j \leq 2k+1$.  Note that by definition,
$$\Phi_t(j) = \left(\sum_{b=1}^t \left\lfloor \frac{j}{b} \right\rfloor\right) - j = (2k-j) + \left(\sum_{b=k+1}^t \left\lfloor \frac{j}{b} \right\rfloor\right).$$
We first assume that $t \geq 4$ is even.  Observe that $\frac{j}{k+1} \leq \frac{2k+1}{k+1} < 2$, and so $\left\lfloor \frac{j}{k+1} \right\rfloor \leq 1$.  Thus, by definition and since $\left\lfloor \frac{j}{t} \right\rfloor \leq \left\lfloor \frac{j}{t-1} \right\rfloor \leq \cdots \leq \left\lfloor \frac{j}{k+1} \right\rfloor$, we have

$$\Phi_t(j) = (2k-j) + \left(\sum_{b=k+1}^t \left\lfloor \frac{j}{b} \right\rfloor\right) \leq (2k-j) + (t-k) \left\lfloor \frac{j}{k+1} \right\rfloor \leq 0 + (t-k)(1) \leq t-1.
$$

Now assume that $t \geq 3$ is odd.  We consider two cases:
\begin{itemize}
\item Assume $j= 2k$.  We have 
$$\Phi_t(j) = \left(\sum_{b=k+1}^t \left\lfloor \frac{2k}{b} \right\rfloor\right).$$
Suppose first that $k \geq \frac{t-1}{2} + 1 = \frac{t+1}{2}$.  Then $t \leq 2k-1$, and so $\left\lfloor \frac{2k}{b} \right\rfloor = 1$ for $k+1 \leq b \leq t$.  Thus, 
$$\Phi_t(j) = t-k \leq t - \left(\frac{t+1}{2}\right) = \frac{2t-t-1}{2} = \frac{t-1}{2}.$$
Secondly, if $k \leq \frac{t-1}{2}$, then $t \geq 2k+1$, and so 
$$\Phi_t(j) = \sum_{b=k+1}^{2k}\left\lfloor \frac{2k}{b} \right\rfloor + \sum_{b=2k+1}^{t}\left\lfloor \frac{2k}{b} \right\rfloor = \sum_{b=k+1}^{2k} 1 + \sum_{b=2k+1}^t 0 = k \leq \frac{t-1}{2}.$$
\item Assume $j = 2k+1$.  In this case, 
$$\Phi_t(j) = \left(\sum_{b=k+1}^t \left\lfloor \frac{2k+1}{b} \right\rfloor\right) - 1.$$
Suppose first that $k \geq \frac{t-1}{2}$.  Then $t \leq 2k+1$, and so $\left\lfloor \frac{2k+1}{b} \right\rfloor = 1$ for $k+1 \leq b \leq t$.  Thus, 
$$\Phi_t(j) = t-k-1 \leq t - \left(\frac{t-1}{2}\right) -1 = \frac{2t-t+1-2}{2} = \frac{t-1}{2}.$$
Secondly, if $k < \frac{t-1}{2}$, then $t \geq 2k+2$, and so 
\begin{align*}
\Phi_t(j) = \left(\sum_{b=k+1}^{2k+1}\left\lfloor \frac{2k+1}{b} \right\rfloor \right)+ \left(\sum_{b=2k+2}^{t} \left\lfloor \frac{2k+1}{b} \right\rfloor \right) - 1 & = \left(\sum_{b=k+1}^{2k+1} 1 \right) + \left(\sum_{b=2k+2}^t 0\right)  - 1 \\ 
          & = (k+1)-1 \\
          & < \frac{t-1}{2}.
\end{align*}
\end{itemize}

\end{proof}

Finally, we investigate the critical values for $j$.

\begin{prop}\label{middlej}
Let $t \geq 3$ and $\ell \geq 3$.  If $\ell \leq j < \ell t$, then $\Phi_t(j) \leq \frac{1}{2}(\ell - 1)(t-1)$.
\end{prop}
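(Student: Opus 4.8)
The plan is to rewrite $\Phi_t(j)$ in a form where the floor functions are transparent, exploiting the symmetry of the configuration in $\ell$ and $t$. By definition $d$ is the sorted multiset $\{\,ab : 1\le a\le \ell,\ 1\le b\le t\,\}$, which is unchanged if we interchange the roles of $\ell$ and $t$; since the claimed bound $\tfrac12(\ell-1)(t-1)$ is also symmetric, we may assume $3\le \ell\le t$. Put $k:=\lfloor j/\ell\rfloor$; the hypothesis $\ell\le j<\ell t$ gives $1\le k\le t-1$, and since $\chi_b(j)=\ell$ for $b\le k$ while $\chi_b(j)=\lfloor j/b\rfloor$ for $b\ge k+1$, we get
$$\Phi_t(j)=k\ell+\sum_{b=k+1}^{t}\left\lfloor\frac{j}{b}\right\rfloor-j=\sum_{b=k+1}^{t}\left\lfloor\frac{j}{b}\right\rfloor-(j-k\ell),\qquad 0\le j-k\ell\le \ell-1.$$
Symmetrically, setting $p:=\lfloor j/t\rfloor$, we also have $\Phi_t(j)=\sum_{a=p+1}^{\ell}\lfloor j/a\rfloor-(j-pt)$ with $0\le j-pt\le t-1$.

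Next I dispose of the easy regimes, those in which $k$ (or dually $p$) is large. For $b\ge k+1$ we have $j/b\le j/(k+1)<\ell$, hence $\lfloor j/b\rfloor\le \ell-1$, and therefore $\Phi_t(j)\le(\ell-1)(t-k)$. When $k\ge(t+1)/2$ this is at most $\tfrac12(\ell-1)(t-1)$, and dually when $p\ge(\ell+1)/2$ the other identity finishes the argument. So it remains to treat $k\le t/2$; then $j<(k+1)\ell\le\tfrac12\ell t+\ell$, so that $j$ lies in the lower portion of the interval $[\ell,\ell t)$ (and, symmetrically, $j<\tfrac12\ell t+t$ as well).

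For these remaining values of $j$ the plan is to estimate $\sum_{b=k+1}^{t}\lfloor j/b\rfloor$ block by block: for each $m\ge1$ and each $b$ with $m(k+1)\le b<(m+1)(k+1)$ one checks, using $j\le(k+1)\ell-1$, that $\lfloor j/b\rfloor\le\lfloor(\ell-1)/m\rfloor$, so that summing over the blocks that meet $\{k+1,\dots,t\}$ bounds $\sum_{b=k+1}^{t}\lfloor j/b\rfloor$ in terms of $k$, $t$, and the divisor sum $\sum_{1\le m\le\ell-1}\lfloor(\ell-1)/m\rfloor$. Combining this estimate with the slack term $-(j-k\ell)$, with the restriction $j<\tfrac12\ell t+\ell$ from the previous step, and with the integrality of $\Phi_t(j)$ should yield $\Phi_t(j)\le\tfrac12(\ell-1)(t-1)$, after directly checking a small number of boundary subcases (small $k$, in particular $k=1$, together with small values of $\ell$).

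I expect the main obstacle to be precisely this middle regime, with $k$ small relative to $t$, where the inequality is essentially sharp: for instance $\Phi_3(3)=2=\tfrac12(\ell-1)(t-1)$ when $\ell=t=3$, so none of the estimates can afford to lose even an additive constant. Making the block estimate, the slack coming from $-(j-k\ell)$, the integrality of $\Phi_t(j)$, and the handful of direct small-case checks fit together tightly enough is where the real work lies; it is also the reason it pays to isolate the cases $t=2$ and $\ell=2$ beforehand in Lemmas~\ref{t=2} and~\ref{l=2}, since those fall in the exceptional branch of Claim~\ref{THECLAIM} and would otherwise contaminate the reductions above.
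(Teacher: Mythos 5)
Your setup and the two outer regimes are correct and essentially match the paper: the identity $\Phi_t(j)=\sum_{b=k+1}^{t}\lfloor j/b\rfloor-(j-k\ell)$ with $k=\lfloor j/\ell\rfloor$, the bound $\Phi_t(j)\le(\ell-1)(t-k)\le\tfrac12(\ell-1)(t-1)$ for $k\ge\tfrac12(t+1)$, and the $\ell\leftrightarrow t$ symmetry of $\sigma$ (a nice observation the paper does not use) are all fine. The genuine gap is the remaining range $k\le t/2$: the block estimate you propose does not close it, and that range is the heart of the proposition. Your bound $\lfloor j/b\rfloor\le\lfloor(\ell-1)/m\rfloor$ for $m(k+1)\le b<(m+1)(k+1)$ only improves on $\ell-1$ starting at $b\ge 2(k+1)$, so when $k$ is near $t/2$ the entire sum sits in the first block and you recover nothing better than $\Phi_t(j)\le(t-k)(\ell-1)-(j-k\ell)$; at $j=k\ell$ with $t=2k$ this is $\tfrac12 t(\ell-1)$, exceeding the target by $\tfrac12(\ell-1)$, which neither the (zero) slack term nor integrality can absorb once $\ell\ge4$. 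Moreover the estimate is loose even well below the middle, because $\lfloor j/b\rfloor\le\ell-1$ is tight only when $j$ is near the top of $[k\ell,(k+1)\ell)$, exactly where the slack $j-k\ell$ is small: for $\ell=5$, $t=12$, $j=15$ (so $k=3$) your blocks give $4\cdot4+4\cdot2+1-0=25>22=\tfrac12(\ell-1)(t-1)$, and for $j=30$ (so $k=6$) they give $6\cdot4-0=24>22$, while the true values are $15$ and $17$; the dual easy case does not apply in either instance since $\lfloor j/t\rfloor<\tfrac12(\ell+1)$. So the plan as stated provably overshoots the bound in the very cases you flag as ``where the real work lies,'' and that work is deferred rather than done.

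What is missing is a finer treatment of the trade-off between $j-k\ell$ and the last few terms $b\in\{2k-1,2k,2k+1\}$, where $\lfloor j/b\rfloor$ is close to $\tfrac12\ell$ rather than $\ell-1$. This is precisely where the paper's proof spends its effort: it isolates the middle values $k=\tfrac12 t$ and $k=\tfrac12(t-1)$, uses $\lfloor k\ell/(2k-1)\rfloor\le\ell-2$ for $k\ge3$ together with the two ``Necessary Bounds'' culminating in $\lfloor j/(2k)\rfloor\le\tfrac12(j-k\ell)+\tfrac12(\ell-1)$ (proved separately for $\ell$ even and odd), handles all $k\le\tfrac12(t-2)$ by induction on $t$ via $\Phi_t(j)=\Phi_{t-1}(j)+\lfloor j/t\rfloor$ and $\lfloor j/t\rfloor\le\tfrac12(\ell-1)$, and checks the base case $t=3$ and the small pairs $(k,\ell)$ by hand. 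To salvage your approach you would need an analogue of these sharpened floor bounds near $b=2k$ (and an argument, such as the paper's induction, for the intermediate $k$ where the coarse block bound still overshoots); without that, the proposal is an outline of the easy cases plus a plan that demonstrably fails on the critical ones.
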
 

\begin{proof}
Let $k$ be the integer such that $k \ell \leq j < (k+1) \ell$.  Note that $1 \leq k \leq t-1$.

We proceed by induction on $t$.  To this end, first suppose that $t = 3$.  Then $k \in \{1, 2\}$.  Fix $k = 1$.  In this case, $3 \leq \ell \leq j \leq 2 \ell - 1$.  Thus, 
$$\Phi_3(j) = \ell + \left\lfloor \frac{j}{2} \right\rfloor + \left\lfloor \frac{j}{3} \right\rfloor - j \leq \ell + \frac{j}{2} + \frac{j}{3} - j = \ell - \frac{j}{6}.$$
Observe that $\ell - \frac{j}{6} \leq \ell - 1 = \frac{1}{2}(\ell - 1)(t-1)$ if and only if $j \geq 6$.  That is, the claim is true for $j \geq 6$ and hence for $\ell \geq 6$.  We consider the cases $\ell = 3, 4$, and $5$  with $j \leq 5$ separately.
\begin{itemize}
\item Let $\ell = 3$ and $j \in \{3, 4, 5\}$.
\begin{itemize}
\item If $j = 3$, then $\Phi_3(j) = 3 + \lfloor \frac{3}{2}  \rfloor + \lfloor \frac{3}{3} \rfloor - 3 = 2 \leq \ell-1 = 2$.
\item If $j = 4$, then $\Phi_3(j) = 3 + \lfloor \frac{4}{2}  \rfloor + \lfloor \frac{4}{3} \rfloor - 4 = 2 \leq \ell-1 = 2$.
\item If $j = 5$, then $\Phi_3(j) = 3 + \lfloor \frac{5}{2}  \rfloor + \lfloor \frac{5}{3} \rfloor - 5 = 1 \leq \ell-1 = 2$.
\end{itemize}
\item Let $\ell = 4$ and $j \in \{4, 5\}$.
\begin{itemize}
\item If $j = 4$, then $\Phi_3(j) = 4 + \lfloor \frac{4}{2}  \rfloor + \lfloor \frac{4}{3} \rfloor - 4 = 3 \leq \ell-1 = 3$.
\item If $j = 5$, then $\Phi_3(j) = 4 + \lfloor \frac{5}{2}  \rfloor + \lfloor \frac{5}{3} \rfloor - 5 = 2 \leq \ell-1 = 3$.
\end{itemize}
\item Let $\ell = 5$ and $j = 5$.  In this case, $\Phi_3(j) = 5 + \lfloor \frac{5}{2}  \rfloor + \lfloor \frac{5}{3} \rfloor - 5 = 3 \leq \ell-1 = 4$.
\end{itemize}
In each of these cases, we see that $\Phi_3(j) \leq \frac{1}{2}(\ell - 1)(t-1)$ as desired.

Now fix $k = 2$ so that $6 \leq 2 \ell \leq j \leq 3 \ell - 1$.  Hence,
$$\Phi_3(j) = 2 \ell + \left\lfloor \frac{j}{3} \right\rfloor - j \leq 2 \ell + \frac{j}{3} - j = 2 \ell - \frac{2}{3}j \leq 2 \ell - \frac{4}{3} \ell = \frac{2}{3} \ell \leq \ell - 1$$
since $\ell \geq 3$.  That is, the claim is true for $k = 2$.  This completes the proof of our base case $t = 3$.  

\medskip
We now set $t \geq 4$ and assume that the assertion holds for $t-1$.  We consider three cases depending on the value of $k$.

\medskip
\noindent
\emph{Case 1:}  Assume $1 \leq k \leq \frac{1}{2}(t-2)$.  Since $j < (k+1)\ell$, we have $j < \frac{1}{2} \ell t$ and so $\frac{j}{t} < \frac{1}{2} \ell$ which implies that $\lfloor \frac{j}{t} \rfloor \leq \frac{1}{2}(\ell - 1)$.  Therefore, by induction,
$$\Phi_t(j) = \Phi_{t-1}(j) + \left\lfloor \frac{j}{t} \right\rfloor \leq \frac{1}{2}(\ell - 1)(t-2) + \left\lfloor \frac{j}{t} \right\rfloor \leq \frac{1}{2}(\ell - 1)(t-2) + \frac{1}{2}(\ell - 1) = \frac{1}{2}(\ell - 1)(t-1).$$

\medskip
\noindent
\emph{Case 2:}  Assume $\frac{1}{2}(t+1) \leq k \leq t-1$.  Observe that $\frac{j}{t} \leq \frac{j}{t-1} \leq \cdots \leq \frac{j}{k+1} < \ell$ and so $\lfloor \frac{j}{t} \rfloor \leq \lfloor \frac{j}{t-1} \rfloor \leq \cdots \leq \lfloor \frac{j}{k+1} \rfloor \leq \ell - 1$.  Thus,
$$\Phi_t(j) = \left(k \ell + \sum_{k+1}^t \left\lfloor \frac{j}{b} \right\rfloor \right) - j \leq k \ell + (t-k)(\ell - 1) - j \leq (t-k)(\ell - 1).$$
Now, since $\frac{1}{2}(t+1) \leq k$, we have 
$$\Phi_t(j) \leq (t-k)(\ell-1) \leq \left(t-\frac{t}{2} - \frac{1}{2} \right)(\ell-1) = \frac{1}{2}(\ell-1)(t-1),$$
as desired.

\medskip
\noindent
\emph{Case 3:}  Assume $\frac{1}{2}(t-2) < k < \frac{1}{2}(t+1)$.  If $t$ is even, let $t = 2m$ with $m \geq 2$; then $k = m = \frac{1}{2}t$.  If $t$ is odd, let $t = 2m+1$ with $m \geq 1$; then $k = m = \frac{1}{2}(t-1)$.  We consider these two cases separately.

\begin{itemize}
\item Suppose $k = \frac{1}{2}t$.  In this case, $k = \frac{1}{2}t \geq \frac{4}{2} = 2$.  We consider two situations.
\begin{enumerate}
\item[(3a)] Let $j = k \ell$.  Then we have the following sequence of equivalences
$$\frac{k \ell}{2k-1} \geq \ell - 1 \iff 2k-1 \geq \ell(k-1) \geq 3(k - 1) \iff k \leq 2 \iff k = 2.$$
Thus, if $k \geq 3$, we have
\begin{align*}
\Phi_t(k \ell) & = \left(k \ell + \sum_{b=k+1}^{2k-2} \left\lfloor \frac{k \ell}{b} \right\rfloor + \left\lfloor \frac{k \ell}{2k-1} \right\rfloor + \left\lfloor \frac{k \ell}{2k} \right\rfloor \right) - k \ell \\
               & \leq (k-2)(\ell-1) + (\ell - 2) + \frac{1}{2} \ell \\
               & = (k-2)(\ell-1) + (\ell-1) - 1 + \frac{1}{2} \ell \\
               & = (k-1)(\ell-1) - 1 + \frac{1}{2} \ell \\
               & < (k-1)(\ell-1) + \frac{1}{2} (\ell - 1) \\
               & = \frac{1}{2}(\ell-1)(2k-1) \\
               & = \frac{1}{2}(\ell-1)(t-1),
\end{align*}
where the first inequality uses the obvious inequalities and hypothesis that 
$$\frac{k \ell}{2k-2} \leq \frac{k \ell}{2k-3} \leq \cdots \leq \frac{k \ell}{k+1} = \frac{j}{k+1} < \ell.$$    

The case $k=2$ remains.  Here $t=4$ and 
$$\Phi_t(2 \ell) = \left(2 \ell + \left\lfloor \frac{2 \ell}{3} \right\rfloor + \left\lfloor \frac{2 \ell}{4} \right\rfloor \right) - 2 \ell \leq \frac{2 \ell}{3} + \frac{\ell}{2} = \frac{7 \ell}{6}.$$
We are done if $\ell \geq 5$ since 
$$\frac{7 \ell}{6} \leq \frac{1}{2} (4-1) (\ell-1) = \frac{3}{2} (\ell-1) \iff \ell \geq 5.$$
We handle the cases $\ell = 3$ and $\ell = 4$ directly:
\begin{itemize}
\item If $\ell = 3$, then $\Phi_t(2 \ell) = \left(2(3) + \left\lfloor \frac{2(3)}{3} \right\rfloor + \left\lfloor \frac{2(3)}{4} \right\rfloor \right) - 2(3) = \left\lfloor 2 \right\rfloor + \left\lfloor \frac{3}{2} \right\rfloor = 3 \leq \frac{1}{2}(3-1)(4-1) = \frac{1}{2}(\ell-1)(t-1)$.
\item If $\ell = 4$, then $\Phi_t(2 \ell) = \left(2(4) + \left\lfloor \frac{2(4)}{3} \right\rfloor + \left\lfloor \frac{2(4)}{4} \right\rfloor \right) - 2(4) = \left\lfloor \frac{8}{3} \right\rfloor + \left\lfloor 2 \right\rfloor = 4 \leq \frac{1}{2}(4-1)(4-1) = \frac{1}{2}(\ell-1)(t-1)$.
\end{itemize}

\item[(3b)] Assume $j \geq k \ell + 1$.  Since $t = 2k$, we want to show that 
$$\Phi_t(j) = \left(k \ell + \sum_{b=k+1}^{2k} \left\lfloor \frac{j}{b} \right\rfloor \right) - j \leq \frac{1}{2}(\ell - 1)(2k-1) = (k-1)(\ell-1) + \frac{1}{2}(\ell-1).$$
Now $\frac{j}{2k-1} \leq \frac{j}{2k-2} \leq \cdots \leq \frac{j}{k+1} < \ell$, and so $\sum_{b=k+1}^{2k-1} \left\lfloor \frac{j}{b} \right\rfloor \leq (k-1)(\ell-1)$. Thus, it suffices to show that 
\begin{equation*}
\left\lfloor \frac{j}{2k} \right\rfloor \leq \frac{1}{2}(j-k \ell) + \frac{1}{2}(\ell-1).\label{eq:inequality}\tag{$\ast$}
\end{equation*}
We will do this by considering $\ell$ even and $\ell$ odd.  However, before proceeding, we verify two necessary bounds.

\medskip
\noindent
\emph{Necessary Bound 1:}  We claim 
$$\left\lfloor \frac{j-k \ell}{2k} \right\rfloor \leq \frac{j-k\ell}{2} - \frac{1}{2}.$$
To see this, let $a := j-k \ell \geq 1$ and $b := \frac{a}{2} \geq \frac{1}{2}$.  Then
$$\left\lfloor \frac{j-k \ell}{2k} \right\rfloor = \left\lfloor \frac{b}{k} \right\rfloor \leq \left\lfloor \frac{b}{2} \right\rfloor.$$
If $b \geq 1$, then
$$\left\lfloor \frac{j-k \ell}{2k} \right\rfloor \leq \left\lfloor \frac{b}{2} \right\rfloor \leq \frac{b}{2} \leq \frac{b}{2} + \left(\frac{b}{2} - \frac{1}{2} \right) = b - \frac{1}{2} = \frac{j-k \ell}{2} - \frac{1}{2}.$$
If $\frac{1}{2} \leq b < 1$, then 
$$\left\lfloor \frac{j-k \ell}{2k} \right\rfloor \leq \left\lfloor \frac{b}{2} \right\rfloor = 0 \leq b - \frac{1}{2} = \frac{j-k \ell}{2} - \frac{1}{2}.$$

\medskip
\noindent
\emph{Necessary Bound 2:}  We have
$$\left\lfloor \frac{j-k \ell + k}{2k} \right\rfloor \leq \frac{j-k\ell}{2}.$$
To see this, again let $a := j-k \ell \geq 1$ and $b := \frac{a}{2} \geq \frac{1}{2}$.  Then
$$\left\lfloor \frac{j-k \ell+k}{2k} \right\rfloor = \left\lfloor \frac{a}{2k} + \frac{k}{2k}\right\rfloor = \left\lfloor \frac{b}{k} + \frac{1}{2} \right\rfloor \leq \left\lfloor \frac{b}{2} + \frac{1}{2} \right\rfloor = \left\lfloor \frac{b+1}{2} \right\rfloor.$$
If $b \geq 1$, then
$$\left\lfloor \frac{j-k \ell+k}{2k} \right\rfloor \leq \left\lfloor \frac{b+1}{2} \right\rfloor \leq \frac{b+1}{2} \leq \frac{b+b}{2} = b = \frac{a}{2} = \frac{j-k \ell}{2}.$$
If $\frac{1}{2} \leq b < 1$, then 
$$\left\lfloor \frac{j-k \ell + k}{2k} \right\rfloor \leq \left\lfloor \frac{b+1}{2} \right\rfloor = 0 < b = \frac{j-k \ell}{2}.$$

\medskip
We now proceed with proving inequality~\eqref{eq:inequality}.  First let $\ell$ be even.  Then by Necessary Bound~1,
\begin{align*}
\left\lfloor \frac{j}{2k} \right\rfloor = \left\lfloor \frac{j- k \ell + k \ell}{2k} \right\rfloor = \left\lfloor \frac{\ell}{2} + \frac{j-k \ell}{2k} \right\rfloor & = \frac{\ell}{2} + \left\lfloor \frac{j - k \ell}{2k} \right\rfloor \\
                                        & \leq \frac{\ell}{2} + \frac{j-k \ell}{2} - \frac{1}{2} \\
                                        & = \frac{j-k \ell}{2} + \frac{\ell-1}{2}.
\end{align*}
Similarly, if $\ell$ is odd, then by Necessary Bound~2, 
\begin{align*}
\left\lfloor \frac{j}{2k} \right\rfloor = \left\lfloor \frac{j - k (\ell-1) + k (\ell-1)}{2k} \right\rfloor & = \left\lfloor \frac{\ell-1}{2} + \frac{j-k \ell + k}{2k} \right\rfloor\\
 & = \frac{\ell - 1}{2} + \left\lfloor \frac{j - k \ell + k}{2k} \right\rfloor \\
                                        & \leq \frac{\ell - 1}{2} + \frac{j-k \ell}{2},
\end{align*}
which completes this case.
\end{enumerate}

\item Suppose $k = \frac{1}{2}(t-1)$.  In this case we have $t=2k+1$ and so we need to show that 
\begin{align*}
\Phi_t(j) = \left(k \ell + \sum_{b=k+1}^{2k+1} \left\lfloor \frac{j}{b} \right\rfloor \right) - j & = \left(\sum_{b=k+1}^{2k+1} \left\lfloor \frac{j}{b} \right\rfloor \right) - (j-k \ell) \\
& \leq \frac{1}{2}(\ell-1)(t-1) = \frac{1}{2}(\ell-1)(2k) = k(\ell-1).
\end{align*}
We again consider two situations.

\begin{enumerate}
\item[(3A)] Let $j = k \ell$.  As in Case (3a), we have the equivalence
$$\frac{k \ell}{2k-1} \geq \ell-1 \iff k \leq 2.$$
So, if $k \geq 3$, then $\lfloor \frac{k \ell}{2k-1} \rfloor \leq \ell-2$ which implies that 
\begin{align*}
\Phi_t(k \ell) & = \left(k \ell + \sum_{b=k+1}^{2k-2} \left\lfloor \frac{k \ell}{b} \right\rfloor + \left\lfloor \frac{k \ell}{2k-1} \right\rfloor + \left\lfloor \frac{k \ell}{2k} \right\rfloor + \left\lfloor \frac{k \ell}{2k+1} \right\rfloor \right) - k \ell \\
               & \leq (k-2)(\ell-1) + (\ell - 2) + \frac{1}{2} \ell + \frac{1}{2} \ell\\
               & = (k-2)(\ell-1) + (\ell-1) + (\ell-1) \\
               & = k(\ell-1),
\end{align*}
as desired.  So we may assume that $k \leq 2$.  
\begin{itemize}
\item If $k=2$ and $\ell \geq 5$, then $\Phi_t(2 \ell) = \left\lfloor \frac{2 \ell}{3} \right\rfloor +  \left\lfloor \frac{2 \ell}{4} \right\rfloor +  \left\lfloor \frac{2 \ell}{5} \right\rfloor \leq \frac{2 \ell}{3} + \frac{2 \ell}{4} + \frac{2 \ell}{5} = \frac{47}{30} \ell \leq k(\ell-1)$.
\item If $k = 2$ and $\ell = 3$, then $\Phi_t(2 \ell) = \left\lfloor \frac{6}{3} \right\rfloor +  \left\lfloor \frac{6}{4} \right\rfloor +  \left\lfloor \frac{6}{5} \right\rfloor = 4 \leq k(\ell-1)$.
\item If $k = 2$ and $\ell = 4$, then $\Phi_t(2 \ell) = \left\lfloor \frac{8}{3} \right\rfloor +  \left\lfloor \frac{8}{4} \right\rfloor +  \left\lfloor \frac{8}{5} \right\rfloor = 5 \leq k(\ell-1)$.
\item If $k=1$ and $\ell \geq 6$, then $\Phi_t(\ell) = \left\lfloor \frac{\ell}{2} \right\rfloor +  \left\lfloor \frac{\ell}{3} \right\rfloor \leq \frac{\ell}{2} + \frac{\ell}{3} = \frac{5}{6} \ell \leq k(\ell-1)$.
\item If $k = 1$ and $\ell = 3$, then $\Phi_t(\ell) = \left\lfloor \frac{3}{2} \right\rfloor +  \left\lfloor \frac{3}{3} \right\rfloor = 2 \leq k(\ell-1)$.
\item If $k = 1$ and $\ell = 4$, then $\Phi_t(\ell) = \left\lfloor \frac{4}{2} \right\rfloor +  \left\lfloor \frac{4}{3} \right\rfloor = 3 \leq k(\ell-1)$.
\item If $k = 1$ and $\ell = 5$, then $\Phi_t(\ell) = \left\lfloor \frac{5}{2} \right\rfloor +  \left\lfloor \frac{5}{3} \right\rfloor = 3 \leq k(\ell-1)$.
\end{itemize}
This completes Case (3A).

\item[(3B)] Suppose $j \geq k \ell+1$.  If $k = 1$, then $t = 3$ and we are done by the base case.  Thus, we assume $k \geq 2$.  The argument is essentially the same as for Case (3b).  We want to show that 
$$\Phi_t(j) \leq k(\ell-1) = (k-1)(\ell-1) + \ell -1.$$
We know that $\sum_{b=k+1}^{2k-1} \left\lfloor \frac{j}{b} \right \rfloor \leq (k-1)(\ell-1)$.  Thus, it suffices to show that 
$$\left\lfloor \frac{j}{2k} \right\rfloor + \left\lfloor \frac{j}{2k+1} \right\rfloor \leq \ell - 1 + j - k \ell.$$
From inequality~\eqref{eq:inequality} in Case (3b), we have that
$$\left\lfloor \frac{j}{2k+1} \right\rfloor \leq \left\lfloor \frac{j}{2k} \right\rfloor \leq \frac{1}{2}(j-k \ell) + \frac{1}{2}(\ell-1).$$
Thus,
$$\left\lfloor \frac{j}{2k} \right\rfloor + \left\lfloor \frac{j}{2k+1} \right\rfloor \leq 2 \left(\frac{1}{2}(j-k \ell) + \frac{1}{2}(\ell-1) \right) = j - k \ell + \ell - 1.$$
\end{enumerate}
\end{itemize} 
\end{proof}

Putting all of the above bounds together yields Claim \ref{THECLAIM}.

\begin{thm}\label{themainthm}
Let $t$, $\ell$, and $j$ be positive integers.  If one of $t$ or $\ell$ is 2 and the other is even, then
$$\Phi_t(j) \leq \frac{1}{2} \ell(t-1).$$
Otherwise,
$$\Phi_t(j) \leq \frac{1}{2} (\ell-1)(t-1).$$
\end{thm}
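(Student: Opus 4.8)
The statement of Theorem~\ref{themainthm} is literally identical to Claim~\ref{THECLAIM}, so the entire content of the proof is to assemble the five preliminary results --- Lemmas~\ref{jbig}, \ref{jsmall}, \ref{t=2}, \ref{l=2}, and Proposition~\ref{middlej} --- into a complete case analysis on the pair $(t,\ell)$ and the location of $j$. The plan is to \emph{partition the range of $j$} into three pieces: the small range $1\le j<\ell$, the large range $j\ge \ell t$, and the critical middle range $\ell \le j < \ell t$. The first two ranges are handled uniformly and immediately by Lemmas~\ref{jsmall} and \ref{jbig}, which both give the stronger bound $\tfrac12(\ell-1)(t-1)$ with no hypothesis on $t$ or $\ell$; so in those two ranges there is nothing left to do. All the work is in the middle range.

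For the middle range $\ell\le j<\ell t$, I would split on the size of $t$ and $\ell$. If $t\ge 3$ and $\ell\ge 3$, Proposition~\ref{middlej} directly delivers $\Phi_t(j)\le\tfrac12(\ell-1)(t-1)$, which is the generic bound and hence suffices. The remaining possibilities are $t\le 2$ or $\ell\le 2$. By the symmetry of the situation --- note that $\Phi_t(j)$ and the target bounds are symmetric in $t$ and $\ell$ in the sense that Lemma~\ref{t=2} handles ``$t=2$, $\ell$ arbitrary'' and Lemma~\ref{l=2} handles ``$\ell=2$, $t\ge3$'' --- these small cases are exactly what Lemmas~\ref{t=2} and \ref{l=2} were designed for. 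When $t=1$, $\Phi_1(j)=\chi_1(j)-j=\min(j,\ell)-j\le 0\le\tfrac12(\ell-1)(t-1)=0$, so that degenerate case is trivial; similarly $\ell=1$ gives $\Phi_t(j)=\min(j,1)+\cdots$ — actually one checks $\chi_b(j)\le 1$ for all $b$ when $\ell=1$, so $\Phi_t(j)\le t-j\le t-1$, but more carefully $\Phi_t(j)=\#\{b:j\ge b\}-j=\min(j,t)-j\le 0$, again trivial.

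So after excluding the degenerate $t=1$ and $\ell=1$, the case split is: (a) $t\ge3,\ \ell\ge3$, use Proposition~\ref{middlej} (plus Lemmas~\ref{jbig},~\ref{jsmall} for the outer ranges) to get the generic bound $\tfrac12(\ell-1)(t-1)$; (b) $t=2$, any $\ell\ge2$, use Lemma~\ref{t=2} in the middle range $\ell\le j<2\ell$, which gives $\tfrac12\ell$ when $\ell$ is even (the exceptional bound $\tfrac12\ell(t-1)$ with $t=2$) and $\tfrac12(\ell-1)$ when $\ell$ is odd (matching $\tfrac12(\ell-1)(t-1)$ with $t=2$), while Lemmas~\ref{jsmall},~\ref{jbig} cover $j<\ell$ and $j\ge2\ell$ with the stronger bound $\tfrac12(\ell-1)$; (c) $\ell=2$, any $t\ge3$, use Lemma~\ref{l=2} in the middle range $2\le j<2t$, giving $t-1$ when $t$ is even (which is $\tfrac12\ell(t-1)$ with $\ell=2$) and $\tfrac12(t-1)$ when $t$ is odd (which is $\tfrac12(\ell-1)(t-1)$ with $\ell=2$), again with Lemmas~\ref{jsmall},~\ref{jbig} covering the outer ranges. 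One then observes that the exceptional clause ``one of $t,\ell$ is $2$ and the other is even'' is precisely the union of the even-$\ell$ subcase of (b) and the even-$t$ subcase of (c), and in every other case the bound obtained is $\tfrac12(\ell-1)(t-1)$ --- this bookkeeping is the one spot requiring a little care, since one must confirm that in the even/$2$ cases the weaker bound $\tfrac12\ell(t-1)$ is genuinely what we claim and in all other cases we really do land at or below $\tfrac12(\ell-1)(t-1)$.

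The main obstacle here is essentially nil: the five lemmas and the proposition are precisely partitioned to cover $\mathbb{Z}_{>0}^2$ after removing the trivial $t=1,\ell=1$ rows, and the only genuine thinking is verifying that the ranges of $j$ used in the lemmas ($1\le j<\ell$; $\ell\le j<\ell t$; $j\ge\ell t$) tile $\mathbb{Z}_{>0}$ and that the numerical values $\tfrac12\ell$, $\tfrac12(\ell-1)$, $t-1$, $\tfrac12(t-1)$ coming out of Lemmas~\ref{t=2} and~\ref{l=2} match the two target expressions $\tfrac12\ell(t-1)$ and $\tfrac12(\ell-1)(t-1)$ after substituting $t=2$ or $\ell=2$ respectively. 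In other words, the proof is: ``Combine Lemmas~\ref{jbig}, \ref{jsmall}, \ref{t=2}, \ref{l=2} and Proposition~\ref{middlej}, splitting on whether $\min(t,\ell)\le 2$,'' followed by the short parity bookkeeping just described.
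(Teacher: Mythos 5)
Your proposal is correct and matches the paper's proof, which simply states that the theorem is an accumulation of Lemmas~\ref{jbig}--\ref{l=2} and Proposition~\ref{middlej}; your explicit tiling of the $j$-ranges and the parity bookkeeping is exactly the intended assembly. If anything, you are slightly more careful than the paper, since you also dispose of the degenerate cases $t=1$ and $\ell=1$, which the lemmas do not explicitly cover in the middle range but which are trivial, as you note.
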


\begin{proof}
This theorem is an accumulation of Lemmas \ref{jbig}---\ref{l=2} and Proposition \ref{middlej}. 
\end{proof}

We can now prove Conjectures \ref{THEEVENCONJ} and \ref{THEODDCONJ} for $n=2$ and line count configurations $\mathbb W$ of type $c = (1, 2, 3, \ldots, t)$.

\begin{thm}
Let $\mathbb W \subset \mathbb P^2$ be a line count configuration of type $c = (1, 2, 3, \ldots, t)$.  For all integers $r \geq 1$ we have 
\begin{enumerate}
\item[(a)] $\alpha((2r-1)\mathbb W) \geq r \alpha(\mathbb W) + (r-1)$;
\item[(b)] $\alpha(2r \mathbb W) \geq r \alpha(\mathbb W) + r$.
\end{enumerate}
\end{thm}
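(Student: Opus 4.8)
The plan is to feed the Hilbert-function lower bound of Section~\ref{alphaandhilb} into the uniform estimate on $\Phi_t(j)$ established in Theorem~\ref{themainthm}, and then to finish with an elementary arithmetic check. First I would record the base data: since $\mathbb W$ has type $c=(1,2,\ldots,t)$ we have $H(\mathbb W)=f_c$, hence $\alpha(\mathbb W)=\alpha(diag(c))=t$, so the two claimed inequalities read $\alpha((2r-1)\mathbb W)\ge rt+(r-1)$ and $\alpha(2r\mathbb W)\ge rt+r$. Next, fixing $\ell$ (to be specialized below to $2r-1$ or $2r$), Lemma~\ref{StarLemma} provides a totally reducing sequence of lines for $\ell\mathbb W$ with reduction vector $d=(\ell,\ell,\ldots,\ell)*(1,2,\ldots,t)=(d_1,\ldots,d_{\ell t})$, a vector with exactly $\ell t$ entries, so the bounds recalled at the end of Section~\ref{alphaandhilb} give
$$\alpha(\ell\mathbb W)\ \ge\ \alpha(diag(d))\ =\ \ell t+\min(0,d_1-1,\ldots,d_{\ell t}-\ell t)\ =\ \ell t-S(d).$$

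Then I would convert $S(d)$ into a bound via $\Phi_t$. By Lemma~\ref{SattainsMax} and the identity $\sigma(j)=\sum_{b=1}^{t}\chi_b(j)$, which means precisely $\sigma(j)-j=\Phi_t(j)$, we have $S(d)=\max_{1\le j\le \ell t}\Phi_t(j)$. Now Theorem~\ref{themainthm} applies: if one of $t$ or $\ell$ is $2$ and the other is even then $S(d)\le\tfrac12\ell(t-1)$, and otherwise $S(d)\le\tfrac12(\ell-1)(t-1)$. Hence $\alpha(\ell\mathbb W)\ge \ell t-\tfrac12\ell(t-1)$ in the first case and $\alpha(\ell\mathbb W)\ge \ell t-\tfrac12(\ell-1)(t-1)$ in the second.

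To prove (a), set $\ell=2r-1$. Because $\ell$ is odd we are never in the exceptional case of Theorem~\ref{themainthm} (that would force $\ell=2$, or $t=2$ with $\ell$ even), so $S(d)\le\tfrac12(2r-2)(t-1)=(r-1)(t-1)$, and therefore $\alpha((2r-1)\mathbb W)\ge(2r-1)t-(r-1)(t-1)=rt+r-1=r\alpha(\mathbb W)+(r-1)$. To prove (b), set $\ell=2r$. In the exceptional case (namely $t=2$, or $r=1$ with $t$ even) we get $\alpha(2r\mathbb W)\ge 2rt-\tfrac12(2r)(t-1)=rt+r$ at once; in the remaining case $\alpha(2r\mathbb W)\ge 2rt-\tfrac12(2r-1)(t-1)$, and one checks the difference $2rt-\tfrac12(2r-1)(t-1)-(rt+r)=\tfrac12(t-1)\ge0$. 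Either way $\alpha(2r\mathbb W)\ge rt+r=r\alpha(\mathbb W)+r$, which is (b).

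I do not anticipate a genuine obstacle: all of the real work is already contained in Claim~\ref{THECLAIM}/Theorem~\ref{themainthm}, and the argument above is bookkeeping. The only point meriting care is keeping track of the exceptional cases of Theorem~\ref{themainthm} in part~(b) and confirming that the weaker estimate $\tfrac12\ell(t-1)$ still suffices there, which it does — the inequality one needs collapses to $t\ge1$, and in fact to the equality $rt+r$ in the exceptional case.
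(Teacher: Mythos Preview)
Your proposal is correct and follows essentially the same approach as the paper's own proof: both reduce the statement to the bound $\alpha(\ell\mathbb W)\ge \ell t-S(d)$ and then invoke Theorem~\ref{themainthm} to control $S(d)$. The only cosmetic difference is that the paper phrases part~(b) directly as the single requirement $S(d)\le\tfrac12\ell(t-1)$ (which Theorem~\ref{themainthm} yields in every case, since the non-exceptional bound $\tfrac12(\ell-1)(t-1)$ is even stronger), whereas you split into the exceptional and non-exceptional subcases and verify each explicitly.
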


\begin{proof}
We know that 
$$\alpha(\ell \mathbb W) = \alpha(\Delta H(\ell \mathbb W)) \geq \alpha(f_d) = \alpha(diag(d)) = \ell t +\min(0, d_1-1, d_2-2, \ldots, d_{\ell t}-\ell t),$$
where $\ell = 2r-1$ in case (a) and $\ell = 2r$ in case (b).  Thus, for case (a) we need to verify that $\min(0, d_1-1, d_2-2, \ldots, d_{\ell t}-\ell t) \geq r+t-rt-1$.  In the case of (b), we need to verify that $\min(0, d_1-1, d_2-2, \ldots, d_{\ell t}-\ell t) \geq r-rt$.  Equivalently, for case (a) we need to verify 
$$S(d) = \max(0, 1-d_1, 2-d_2, \ldots, \ell t-d_{\ell t}) \leq \frac{1}{2}(\ell-1)(t-1).$$
For case (b), we equivalently need to show that 
$$S(d) = \max(0, 1-d_1, 2-d_2, \ldots, \ell t-d_{\ell t}) \leq \frac{1}{2}\ell(t-1).$$
Both bounds follow by Theorem \ref{themainthm}.
\end{proof}

\section{Generalization of the Key Case}\label{generalizations}

In the previous section we proved Conjectures \ref{THEEVENCONJ} and \ref{THEODDCONJ} for line count configurations $\mathbb W \subset \mathbb P^2$ of type $c = (1, 2, \ldots, t)$.  It is natural to try to generalize this result in a number of different directions.  In this section we consider line count configurations of type $c = (c_1, \ldots, c_t)$ where $c_i \geq i$ for each $i$.

To work on this more general case, we generalize $S(d)$ from the previous section.

\begin{defn}
For an integer vector $v = (v_1, \ldots, v_m)$ whose entries are non-decreasing, we define the function
$$S(v):= \max(0, 1-v_1, 2-v_2, \ldots, m-v_m). $$
\end{defn}

We will need the following preliminary lemmas.  

\begin{lemma}\label{genprelemma}
Let $v = (v_1, \ldots, v_m)$ be an integer vector whose entries are non-decreasing, and let $1\le i \le m$ be an index such that $v_{i-1}<v_i$ if $i>1$.  Define $v' := (v_1,\ldots,v_{i-1},v_i-1,\ldots,v_m)$.  Then $S(v) \leq S(v')$.
\end{lemma}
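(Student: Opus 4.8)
The plan is to reduce the statement to an elementary termwise comparison of the two maxima that define $S(v)$ and $S(v')$, exploiting that $v$ and $v'$ differ in exactly one coordinate (the $i$-th, which decreases by $1$).

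First I would verify that $v'$ is a legitimate argument for the operator $S$, i.e.\ that its entries are still non-decreasing, since $S$ was only defined on non-decreasing vectors. This is exactly where the hypothesis ``$v_{i-1} < v_i$ if $i>1$'' is used: when $i>1$ it gives $v_{i-1} \le v_i - 1$, so the consecutive pair $(v_{i-1},\, v_i - 1)$ of $v'$ is non-decreasing; when $i<m$ the pair $(v_i - 1,\, v_{i+1})$ is non-decreasing because $v_i - 1 < v_i \le v_{i+1}$; and every other consecutive pair of $v'$ agrees with the corresponding pair of $v$. Hence $v'$ is non-decreasing and $S(v')$ is defined.

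Next I would write the two quantities explicitly,
$$S(v) = \max\bigl(0,\, 1 - v_1,\, \ldots,\, m - v_m\bigr)$$
and
$$S(v') = \max\bigl(0,\, 1 - v_1,\, \ldots,\, (i-1) - v_{i-1},\, i - (v_i - 1),\, (i+1) - v_{i+1},\, \ldots,\, m - v_m\bigr),$$
then observe that the finite lists of integers appearing inside these two maxima are identical except in the slot indexed by $i$, where the entry $i - (v_i - 1) = (i - v_i) + 1$ of the second list exceeds the corresponding entry $i - v_i$ of the first. Since every entry of the first list is therefore $\le$ the corresponding entry of the second list, passing to maxima preserves the inequality, which gives $S(v) \le S(v')$.

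There is no genuine obstacle in this argument. The only points requiring care are the bookkeeping in the first step that confirms $v'$ is still non-decreasing (so that $S(v')$ even makes sense), and the simple remark that subtracting $1$ from $v_i$ affects only the single term $i - v_i$ of the defining maximum and can only push that maximum up.
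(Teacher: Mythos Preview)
Your proof is correct and follows essentially the same approach as the paper: verify that $v'$ is still non-decreasing so that $S(v')$ is defined, then compare the two maxima termwise, noting that they agree everywhere except at index $i$, where the $v'$-term exceeds the $v$-term by $1$. Your write-up is somewhat more explicit in checking the non-decreasing condition on $v'$, but the argument is identical in substance.
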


\begin{proof}
Note that the entries of $v'$ are non-decreasing, and hence $S(v')$ is defined. The arguments over which the maximums $S(v)$ and $S(v')$ are being taken have the same corresponding entries, except at index $i$, where the entry of $v'$ is $1$ greater than the $i$th entry of $v$. Hence, $S(v') \geq S(v)$.
\end{proof}

\begin{lemma}\label{genprelemma2}
Let $v = (v_1, \ldots, v_m)$ be an integer vector whose entries are non-decreasing, and let $w$ be an integer vector (not necessarily non-decreasing) such that $w_i\le v_i$ for $1\le i \le m$.  
Then $S(v) \leq S(\pi(w))$.
\end{lemma}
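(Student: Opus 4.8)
The plan is to bypass the iterative single-step reduction of Lemma~\ref{genprelemma} and instead compare the sorted vector $\pi(w)$ against $v$ coordinatewise. Write $\pi(w) = (u_1, \ldots, u_m)$ with $u_1 \le u_2 \le \cdots \le u_m$, so that $u_k$ is the $k$-th smallest entry of $w$. The whole lemma will hinge on the single claim that $u_k \le v_k$ for every $k$ with $1 \le k \le m$.

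To prove this claim, I would fix $k$ and argue by counting. Since $v$ is non-decreasing, $v_i \le v_k$ for every $i \le k$; combined with the hypothesis $w_i \le v_i$, this gives $w_i \le v_k$ for all $i \in \{1, \ldots, k\}$. Hence at least $k$ of the entries $w_1, \ldots, w_m$ are $\le v_k$, which forces the $k$-th order statistic of $w$ to satisfy $u_k \le v_k$.

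Granting the claim, the conclusion is immediate: for each $k$ we have $k - v_k \le k - u_k \le \max(0, 1 - u_1, 2 - u_2, \ldots, m - u_m) = S(\pi(w))$, and of course $0 \le S(\pi(w))$ by the definition of $S$. Taking the maximum of these $m+1$ quantities yields $S(v) = \max(0, 1 - v_1, \ldots, m - v_m) \le S(\pi(w))$, as desired. (One should note in passing that $\pi(w)$ is non-decreasing, so $S(\pi(w))$ is indeed defined.)

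I do not expect a genuine obstacle here; the only point requiring a little care is the order-statistic step, namely that having at least $k$ entries of $w$ bounded by $v_k$ forces $u_k \le v_k$ — a standard fact, but worth stating explicitly since $w$ itself is not assumed monotone. An alternative would be to deduce the result from Lemma~\ref{genprelemma} by decrementing $v$ down to $\pi(w)$ one unit at a time while preserving non-decreasing order, but that requires tracking which coordinate to lower at each stage, so the direct coordinatewise comparison above is the cleaner route.
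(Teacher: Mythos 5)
Your proof is correct, and it takes a genuinely shorter route than the paper's. Both arguments rest on the same key fact, the coordinatewise comparison $\pi(w)_k \le v_k$ for all $k$; you obtain it by the order-statistic counting argument (at least $k$ entries of $w$, namely $w_1,\ldots,w_k$, are $\le v_k$, so the $k$-th smallest is $\le v_k$), while the paper argues by contradiction on the least index where the comparison fails. Where you diverge is in what you do with that fact: the paper iteratively decrements $v$ one unit at a time down to $\pi(w)$, invoking Lemma~\ref{genprelemma} at every stage to chain the inequalities $S(v)=S(v^{(0)})\le S(v^{(1)})\le\cdots\le S(v^{(k)})=S(\pi(w))$, whereas you observe that once $u_k:=\pi(w)_k\le v_k$ holds for every $k$, the conclusion is immediate from the definition of $S$, since each term $k-v_k\le k-u_k\le S(\pi(w))$ and $0\le S(\pi(w))$. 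Your route buys simplicity: it makes Lemma~\ref{genprelemma} unnecessary for this statement and exposes the real content as the elementary monotonicity of $S$ under coordinatewise domination of non-decreasing vectors; the paper's route buys nothing extra here beyond reusing its single-step reduction lemma, which it had already proved. Your parenthetical that $\pi(w)$ is non-decreasing, so $S(\pi(w))$ is defined, is exactly the right point to flag, and the order-statistic step you single out is indeed the only place where $w$ not being monotone matters.
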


\begin{proof}
First note that $\pi(w)_i \le v_i$ for $1\le i \le m$.  Otherwise, suppose that $i$ is the least index where $\pi(w)_i > v_i$.  Let $i'$ be the index of $\pi(w)_i$ before $\pi$ was applied.  If $i'>i$ then $\pi(w)_i = w_{i'}\le w_i \le v_i$. If $i'<i$ then $\pi(w)_i = w_{i'} \le v_{i'} \le v_{i}$.  Both cases give a contradiction.

We iteratively transform $v$ to $\pi(w)$ by reducing one entry by $1$ at each stage.  Set $v^{(0)}:=v$.  For $j\ge0$, let $i$ be the least index such that $v^{(j)}_i > \pi(w)_i$.  Define $v^{(j+1)} := (v^{(j)}_1,\ldots,v^{(j)}_{i-1},v^{(j)}_i-1,\ldots,v^{(j)}_m)$.  Note that if $i>1$, then $v^{(j+1)}_{i-1}=\pi(w)_{i-1}\le \pi(w)_i < v^{(j)}_i$.  Hence Lemma~\ref{genprelemma} applies, and $S(v^{(j)})\le S(v^{(j+1)})$.

As an entry is reduced by $1$ at each stage, this process terminates at some step $k$ where $v^{(k)}=\pi(w)$.  Thus, 
\[S(v)=S(v^{(0)})\le S(v^{(1)}) \le \cdots \le S(v^{(k)}) = S(\pi(w)).\qedhere\]
\end{proof}

We can now prove Conjectures \ref{THEEVENCONJ} and \ref{THEODDCONJ} in the following more general case:

\begin{thm}\label{thm:GeneralCase}
Let $\mathbb W \subset \mathbb P^2$ be a line count configuration of type $c = (c_1, c_2, \ldots, c_t)$ where $c_i \ge i$ for $1 \le i \le t$.  For all integers $r \geq 1$ we have
\begin{enumerate}
\item[(1)] $\alpha((2r-1)\mathbb W) \geq r \alpha(\mathbb W) + (r-1)$;
\item[(2)] $\alpha(2r \mathbb W) \geq r \alpha(\mathbb W) + r$.
\end{enumerate}
\end{thm}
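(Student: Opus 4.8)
The plan is to reduce Theorem~\ref{thm:GeneralCase} to the key case already proved (the last theorem of the previous section, for line count configurations of type $(1,2,\ldots,t)$), so that essentially no content beyond Theorem~\ref{themainthm} is needed; the hypothesis $c_i\ge i$ is exactly what makes the reduction work. First I would record that $\alpha(\mathbb W)=t$. If $F_1,\ldots,F_t$ are linear forms cutting out the lines $\mathbb L_1,\ldots,\mathbb L_t$, then $F_1\cdots F_t\in I_{\mathbb W}$ is a nonzero degree-$t$ form, so $\alpha(\mathbb W)\le t$; and applying Lemma~\ref{StarLemma} with $a=(1,\ldots,1)$ shows $\mathbb W$ has reduction vector $(1,\ldots,1)*c=\pi(c)=c$, so $\alpha(\mathbb W)\ge\alpha(diag(c))=t-S(c)=t$, since $S(c)=\max(0,1-c_1,\ldots,t-c_t)=0$ because each $c_i\ge i$. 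Hence the inequalities to be proved become $\alpha((2r-1)\mathbb W)\ge rt+(r-1)$ and $\alpha(2r\mathbb W)\ge rt+r$.

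Next, fix $\ell\in\{2r-1,2r\}$. By Lemma~\ref{StarLemma}, $\ell\mathbb W$ admits a totally reducing sequence of lines with reduction vector $d:=(\ell,\ldots,\ell)*c$, a non-decreasing vector of $\ell t$ entries, so $\alpha(\ell\mathbb W)\ge\alpha(diag(d))=\ell t-S(d)$ and it suffices to show $S(d)\le\frac12(\ell-1)(t-1)$ when $\ell$ is odd and $S(d)\le\frac12\ell(t-1)$ when $\ell$ is even. Exactly as in the key case, if $\sigma(j)$ denotes the number of entries of $d$ not exceeding $j$, then $S(d)\le\max(0,\max_{j\ge1}(\sigma(j)-j))$; and because the entries of $d$ are precisely the numbers $a\,c_b$ with $1\le a\le\ell$ and $1\le b\le t$, the bound $c_b\ge b$ gives
$$\sigma(j)=\sum_{b=1}^t\min\!\left(\ell,\left\lfloor\frac{j}{c_b}\right\rfloor\right)\le\sum_{b=1}^t\min\!\left(\ell,\left\lfloor\frac{j}{b}\right\rfloor\right)=\sum_{b=1}^t\chi_b(j)=\Phi_t(j)+j.$$
Therefore $S(d)\le\max(0,\max_{j\ge1}\Phi_t(j))=\max_{j\ge1}\Phi_t(j)$ (note $\Phi_t(1)=0$), and Theorem~\ref{themainthm} bounds this by $\frac12(\ell-1)(t-1)$ unless one of $\ell,t$ is $2$ and the other is even, in which case by $\frac12\ell(t-1)$. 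Since $\ell=2r-1$ is always odd, the exceptional regime is reachable only when $\ell=2r$, which is exactly where $\frac12\ell(t-1)$ is the bound we need; substituting back into $\alpha(\ell\mathbb W)\ge\ell t-S(d)$ and using $\alpha(\mathbb W)=t$ then yields both (1) and (2).

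The substantive work is entirely contained in Theorem~\ref{themainthm}; given that bound on $\Phi_t$, the present theorem is routine, the only point worth double-checking being that the two regimes of Theorem~\ref{themainthm} line up correctly with the parities of $\ell=2r-1$ and $\ell=2r$, as indicated above. If one prefers to phrase the estimate through reduction vectors rather than through the counting function $\sigma$, Lemmas~\ref{genprelemma} and~\ref{genprelemma2} furnish the alternative: $(\ell,\ldots,\ell)*c$ entrywise dominates $(\ell,\ldots,\ell)*(1,2,\ldots,t)$ because $c_i\ge i$, so $S\bigl((\ell,\ldots,\ell)*c\bigr)\le S\bigl((\ell,\ldots,\ell)*(1,2,\ldots,t)\bigr)$, and the right-hand side was bounded in the key case. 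Finally, there is a geometric shortcut that avoids reduction vectors altogether: as $c_i\ge i$, choose a sub-configuration $\mathbb W'\subseteq\mathbb W$ of type $(1,2,\ldots,t)$; then $\ell\mathbb W'\subseteq\ell\mathbb W$ forces $\alpha(\ell\mathbb W)\ge\alpha(\ell\mathbb W')$, and the key-case theorem applied to $\mathbb W'$, together with $\alpha(\mathbb W')=\alpha(\mathbb W)=t$, concludes the proof.
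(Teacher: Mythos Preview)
Your argument is correct, and your principal route---bounding $S\bigl((\ell,\ldots,\ell)*c\bigr)$ by the corresponding quantity for $(\ell,\ldots,\ell)*(1,2,\ldots,t)$ and then invoking Theorem~\ref{themainthm}---is exactly the paper's strategy; your option via Lemmas~\ref{genprelemma}--\ref{genprelemma2} is literally what the paper does, and your option via the counting function $\sigma$ is the same monotonicity unpacked. Two differences are worth recording. First, your determination of $\alpha(\mathbb W)$ is cleaner: the paper only secures $\alpha(\mathbb W)\le t$, and does so by a detour (when $c$ is not GMS, enlarge $\mathbb W$ to a configuration $\mathbb W'$ whose type is GMS with $\alpha(diag(c'))=t$), whereas your observation that $S(c)=0$ directly from $c_i\ge i$ gives $\alpha(\mathbb W)\ge\alpha(diag(c))=t$ and the product $F_1\cdots F_t$ gives the reverse inequality, avoiding any GMS discussion. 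Second, your geometric shortcut---passing to a sub-configuration $\mathbb W'\subseteq\mathbb W$ of type $(1,2,\ldots,t)$ and using $\alpha(\ell\mathbb W)\ge\alpha(\ell\mathbb W')$ together with $\alpha(\mathbb W')=t=\alpha(\mathbb W)$---is a genuinely simpler alternative that the paper does not give; it reduces the general statement to the key case in one line without ever writing down a reduction vector for $\ell\mathbb W$. The trade-off is that the reduction-vector route (yours and the paper's) yields the slightly sharper numerical information $S\bigl((\ell,\ldots,\ell)*c\bigr)\le\frac12(\ell-1)(t-1)$ in the non-exceptional regime, which the geometric shortcut does not directly expose.
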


\begin{proof}
If $c$ is GMS, then $H(\mathbb W) = f_c$ and so $\alpha(\mathbb W) = \alpha(diag(c)) = t$.  If $c$ is not GMS then we can add points to $\mathbb W$ to obtain a line count configuration $\mathbb W'$ of type $c' = (c_1', \ldots, c_t')$ so that $\mathbb W \subseteq \mathbb W', c'$ is GMS and $\alpha(diag(c')) = t$.  In this case, $\alpha(\mathbb W) \leq \alpha(\mathbb W') = \alpha(diag(c')) = t$.  In either situation, letting $\ell = 2r-1$ for case (1) and $\ell = 2r$ for case (2), we can find a totally reducing sequence of lines for $\ell \mathbb W$ with associated reduction vector 
$$v = (\ell, \ell, \ldots, \ell) * (c_1, c_2, \ldots, c_t) = (v_1, v_2, \ldots, v_{\ell t})$$
so that $H(\ell \mathbb W) \geq f_v$.  We see that  
$$\alpha(\ell \mathbb W) = \alpha(\Delta H(\ell \mathbb W)) \geq \alpha(f_v) = \alpha(diag(v)) = \ell t +\min(0, v_1-1, v_2-2, \ldots, v_{\ell t}-\ell t).$$
So, for case (1), it suffices to show that $\alpha(diag(v)) \geq rt+r-1$ or, equivalently, to verify the bound $S(v) \leq (r-1)(t-1)$.
For case (2), it suffices to show that $\alpha(diag(v)) \geq rt + r$ or, equivalently $S(v)\leq r(t-1)$.

Note that $v_i \ge w_i$ for $1\le i \le \ell t$, where $w=(\ell,\ell,\ldots,\ell)\circ(1,2,\ldots,t)$.  By Lemma~\ref{genprelemma2}, $S(v)\le S(d)$, where $d=\pi(w)$.  The result then follows by Theorem~\ref{themainthm}.
\end{proof}

\section{Future Directions}\label{future}

Theorem~\ref{thm:GeneralCase} proves Conjectures~\ref{THEEVENCONJ} and \ref{THEODDCONJ} for a large number of types $c$ of line count configurations by showing Claim~\ref{THECLAIM}.  However, it is not possible to prove the bounds on $S(d)$ in Claim~\ref{THECLAIM} for all values of $\ell$ and $t$.  For instance, when $\ell=4$ and $c=(1,1,2,2,3)$, we have $S(d)=10>8=\frac{1}{2}\ell (t-1)$.  It is an interesting question to determine for which $\ell$ and $c$ the bounds in Claim~\ref{THECLAIM} hold.

As shown in the proof of Theorem~\ref{thm:GeneralCase}, if $c$ and $c'$ are two $t$-dimensional non-decreasing integer vectors where $1\le c_i\le c'_i$ for all $1\le i \le t$, then $S((\ell,\ldots,\ell)*c')\le S((\ell,\ldots,\ell)*c)$.  Thus it is natural to ask for the maximal vectors $c$ under this order where $S((\ell,\ldots,\ell)*c)\le \frac{1}{2}(\ell-1)(t-1)$ fails to hold.  The results of Section~\ref{keycase} show that such vectors must be below $(1,2,\ldots,t)$.  However, we believe that when $\ell$ and $t$ are both large, the maximal vectors for which the bound fails are quite far below $(1,2,\ldots,t)$.  For instance, when $\ell=9$ and $c=(1,1,1,2,4,6,7,8,9)$, then $S(d)=31\le 32 = \frac{1}{2}(\ell-1)(t-1)$.

There are counter-examples to the conjectures of Harbourne and Huneke when working over a field of positive characteristic (see \cite{refHS} and the references within).  However, despite the limitations of using the bounds on $S(d)$ from Claim~\ref{THECLAIM}, we still believe that Conjectures~\ref{THEEVENCONJ} and \ref{THEODDCONJ} hold when working over a field of characteristic 0.  In this case, the conjectures remain open for many families of points in addition to line count configurations in $\mathbb{P}^2$ of type $(c_1,\ldots,c_t)$ where not all $c_i\ge i$.  Specifically, the conjectures are open in $\mathbb P ^n$ where $n>2$.  It seems that further progress will require the development of tools similar to the bounds of \cite{refCHT} that work in more general settings.


\end{document}